\DeclareMathOperator{\lcm}{lcm}
\DeclareMathOperator{\ord}{ord}
\newcommand{\NN}{\mathbb{N}}
\newcommand{\ZZ}{\mathbb{Z}}
\def\multiset#1#2{\ensuremath{\left(\kern-.3em\left(\genfrac{}{}{0pt}{}{#1}{#2}\right)\kern-.3em\right)}}
\theoremstyle{plain}
\newtheorem{thm}{Theorem}
\newtheorem{lemma}[thm]{Lemma}
\newtheorem{cor}[thm]{Corollary}
\newtheorem{conj}[thm]{Conjecture}
\newtheorem{prop}[thm]{Proposition}
\newtheorem{qn}[thm]{Question}
\theoremstyle{definition}
\newtheorem{defn}[thm]{Definition}
\theoremstyle{remark}
\newtheorem{rem}[thm]{Remark}
\newtheorem{notation}[thm]{Notation}
\newtheorem*{ex}{Example}
\numberwithin{equation}{section}
\numberwithin{thm}{section}
\begin{document}

\title{On Generalized Carmichael Numbers}
\author{Yongyi Chen \and Tae Kyu Kim}
\date{\today}
\maketitle

\begin{abstract}
    Given an integer $k$, define $C_k$ as the set of integers $n > \max(k,0)$ such that $a^{n-k+1} \equiv a \pmod{n}$ holds for all integers $a$. We establish various multiplicative properties of the elements in $C_k$ and give a sufficient condition for the infinitude of $C_k$. Moreover, we prove that there are finitely many elements in $C_k$ with one and two prime factors if and only if $k>0$ and $k$ is prime. In addition, if all but two prime factors of $n \in C_k$ are fixed, then there are finitely many elements in $C_k$, excluding certain infinite families of $n$. We also give conjectures about the growth rate of $C_k$ with numerical evidence. We explore a similar question when both $a$ and $k$ are fixed and prove that for fixed integers $a \geq 2$ and $k$, there are infinitely many integers $n$ such that $a^{n-k} \equiv 1 \pmod{n}$ if and only if $(k,a) \neq (0,2)$ by building off the work of Kiss and Phong. Finally, we discuss the multiplicative properties of positive integers $n$ such that Carmichael function $\lambda(n)$ divides $n-k$.
\end{abstract}

\section{Introduction}
In 1860, Fermat proved that if $p$ is a prime number, then $p$ divides $a^{p-1} - 1$ for any integer $a$ not divisible by $p$, a result now known as \emph{Fermat's little theorem}. An equivalent formulation is that whenever $p$ is a prime number, $p$ divides $a^p - a$ for all integers $a$. Fermat's little theorem gives a test for prime numbers. For a given integer $n$, randomly pick some integers $a_1, a_2, a_3, \ldots, a_l$. If $a_i^n \equiv a_i \pmod{n}$ is true for all $1 \leq i \leq l$, then it is probable that $n$ is a prime.

The question now arose for the converse of Fermat's theorem: if an integer $n$ is such that $a^n \equiv a$ (mod $n$) for all integers $a$, then is $n$ necessarily prime? A counterexample to the converse was found in 1910 by Carmichael \cite{carmichael1910note} who noted that $561 = 3 \times 11 \times 17$ divides $a^{561} - a$ for all integers $a$. This was proven using a result by Korselt \cite{korselt1899probleme} who gave an equivalent condition for a positive integer $n$ to divide $a^n - a$ for all integers $a$.

In a series of papers around 1910, Carmichael extensively studied the composite integers with this property, which have become to be known as the \emph{Carmichael numbers}. In 1912, Carmichael \cite{carmichael1912composite} described an algorithm to construct these numbers, wishfully stating that ``this list might be indefinitely extended.'' This conjecture was resolved in 1994 when Alford, Granville, and Pomerance \cite{alford1994there} proved that there are infinitely many Carmichael numbers, inspired by a heuristic argument by Erd\H{o}s.

In 1939, Chernick \cite{chernick1939fermat} proved that if $6n+1, 12n+1, 18n+1$ are all primes, then $(6n+1)(12n+1)(18n+1)$ is a Carmichael number. Dickson's conjecture states that for a finite set of linear forms $a_1+b_1n$, $a_2+b_2n$, \ldots, $a_r + b_r n$ with $b_i \geq 1$, then there are infinitely many positive integers $n$ for which they are all prime, unless there is a congruence condition preventing this. A proof of Dickson's conjecture for $r=3$ would prove that there are infinitely many Carmichael numbers of the form $(6n+1)(12n+1)(18n+1)$. It remains an open question whether there are infinitely many Carmichael numbers with a given number of prime factors.

A crucial fact in Carmichael's proof that $561$ is a Carmichael number was Korselt's criterion, which enables us to easily determine whether an integer $n$ is a Carmichael number from its prime factorization.

\begin{prop} [Korselt's criterion]
A positive integer $n$ divides $a^n - a$ for all numbers $a \in \NN$ if and only if $n$ is squarefree and $p-1$ divides $n-1$ for all primes $p$ dividing $n$.
\end{prop}
From Korselt's criterion, we can check that $561$ is a Carmichael number since it is squarefree, and $3-1=2$, $11-1 = 10$, and $17 - 1 = 16$ divide $561 - 1 = 560 = 2^4 \times 5 \times 7$.

We can restate Korselt's criterion using the Carmichael function.
\begin{defn}
The \emph{Carmichael function} $\lambda(n)$ is the greatest order of an element in $(\ZZ/n\ZZ)^\times$.
\end{defn}
It is well-known that the Carmichael function $\lambda(n)$ can be expressed in terms of the prime factors of $n$:
\begin{prop}
Let $n$ be a positive integer with prime factorization $\prod_{i=1}^r p_i^{e_i}$. The Carmichael function has the closed form $\lambda(n) = \lcm_{i=1}^r \left\{\lambda\left(p_i^{e_i}\right)\right\}$ where 
\[ \lambda(p^{e}) = \begin{cases}
p^{e-1}(p-1) & \text{ if $p$ is odd or $p^e = 2, 4$} \\
p^{e-2}(p-1) & \text{ if $p = 2$ and $e > 2$} 
\end{cases}.\] 
In particular, when $n$ is squarefree, $\lambda(n) = \lcm_{i=1}^r \{p_i-1\}$.
\end{prop}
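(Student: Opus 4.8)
The plan is to reduce to prime powers via the Chinese Remainder Theorem and then pin down the group structure of $(\ZZ/p^e\ZZ)^\times$ by explicit order computations. First I would record the abstract fact underlying the $\lcm$ formula: in a finite abelian group $G = \prod_{i=1}^r G_i$, an element $(a_1,\dots,a_r)$ has order $\lcm_i \ord(a_i)$, and each $\ord(a_i)$ divides the maximal order $m_i$ attained inside $G_i$; choosing each $a_i$ of order $m_i$ exhibits an element of order $\lcm_i m_i$, while no element can exceed this value. Hence the greatest order of an element of $G$ equals $\lcm_i m_i$. The ring isomorphism $\ZZ/n\ZZ \cong \prod_i \ZZ/p_i^{e_i}\ZZ$ restricts to a group isomorphism $(\ZZ/n\ZZ)^\times \cong \prod_i (\ZZ/p_i^{e_i}\ZZ)^\times$, so this fact yields $\lambda(n) = \lcm_i \lambda(p_i^{e_i})$ at once, reducing everything to computing $\lambda(p^e)$.

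Next I would compute $\lambda(p^e)$ for odd $p$, where the claim is that $(\ZZ/p^e\ZZ)^\times$ is cyclic of order $\phi(p^e) = p^{e-1}(p-1)$, so that its maximal element order equals its full order. I would prove cyclicity in the standard two stages: first exhibit a primitive root $g$ modulo $p$ using that the multiplicative group of the field $\FF_p$ is cyclic, then show that $g$ (after replacing it by $g + p$ if needed to arrange $g^{p-1} \not\equiv 1 \pmod{p^2}$) remains a generator modulo every higher power. The engine of the lifting step is a $p$-adic valuation computation: by induction one gets $g^{p^{k-1}(p-1)} = 1 + p^{k} u_k$ with $u_k$ coprime to $p$, which forces the order of $g$ modulo $p^e$ to be exactly $p^{e-1}(p-1)$, giving $\lambda(p^e) = p^{e-1}(p-1)$.

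Then I would handle $p = 2$, where the unit group fails to be cyclic for $e \ge 3$. The cases $e = 1$ (trivial group, $\lambda(2) = 1$) and $e = 2$ (cyclic of order $2$, $\lambda(4) = 2$) are direct. For $e \ge 3$ I would establish $(\ZZ/2^e\ZZ)^\times \cong \ZZ/2 \times \ZZ/2^{e-2}$ by showing that $5$ has order $2^{e-2}$ and that $-1 \notin \langle 5 \rangle$; the order of $5$ is again pinned down by an inductive valuation identity, namely $5^{2^{k}} \equiv 1 + 2^{k+2} \pmod{2^{k+3}}$, which gives $v_2(5^{2^k}-1) = k+2$ and hence order exactly $2^{e-2}$ modulo $2^e$. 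The greatest element order is then $\lcm(2, 2^{e-2}) = 2^{e-2} = 2^{e-2}(2-1)$, matching the stated formula. The squarefree case follows by setting every $e_i = 1$: for odd $p_i$ we get $\lambda(p_i) = p_i - 1$, and $\lambda(2) = 1 = 2-1$, so $\lambda(n) = \lcm_i (p_i - 1)$.

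I expect the main obstacle to be the two valuation computations—determining \emph{exactly}, not merely up to a bound, the order of a primitive root modulo $p^e$ for odd $p$ and the order of $5$ modulo $2^e$—since these are precisely where the nonuniform behavior at $p = 2$ originates, and where an off-by-one error in the $p$-adic exponent would corrupt the final closed form.
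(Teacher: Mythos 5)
Your proposed proof is correct and complete: the CRT reduction to prime powers, the exact-order (valuation) computations establishing cyclicity of $(\ZZ/p^e\ZZ)^\times$ for odd $p$, and the decomposition $(\ZZ/2^e\ZZ)^\times \cong \langle -1\rangle \times \langle 5\rangle$ for $e \geq 3$ are all carried out correctly, including the two $p$-adic valuation identities you flag as the delicate points. The paper itself offers no proof to compare against --- it states this proposition as a well-known fact about the Carmichael function --- so your argument simply supplies the standard textbook justification that the authors chose to omit.
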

Thus, the following is an equivalent formulation of Korselt's criterion:
\begin{prop}
A positive integer $n$ divides $a^n - a$ for all numbers $a \in \NN$ if and only if $n$ is squarefree and $\lambda(n)$ divides $n-1$.
\end{prop}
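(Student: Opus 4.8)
The plan is to deduce this proposition directly from Korselt's criterion together with the closed form for $\lambda(n)$, since both are available to us. First I would apply Korselt's criterion to replace the condition that $n \mid a^n - a$ for all $a \in \NN$ by the pair of conditions that $n$ is squarefree and that $p - 1 \mid n - 1$ for every prime $p \mid n$. Squarefreeness appears verbatim in both formulations, so the entire content of the argument reduces to showing that, \emph{under the assumption that $n$ is squarefree}, the condition ``$p - 1 \mid n - 1$ for all primes $p \mid n$'' is equivalent to the single condition $\lambda(n) \mid n - 1$.

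To establish this equivalence, I would write $n = \prod_{i=1}^r p_i$ and invoke the proposition giving the closed form of the Carmichael function, which in the squarefree case yields $\lambda(n) = \lcm_{i=1}^r \{p_i - 1\}$. The key elementary fact is the universal property of the least common multiple: an integer $m$ is divisible by $\lcm_{i=1}^r \{p_i - 1\}$ if and only if it is divisible by each $p_i - 1$ individually. Taking $m = n - 1$ then shows that $\lambda(n) \mid n - 1$ holds exactly when $p_i - 1 \mid n - 1$ for every $i$, which is precisely Korselt's divisibility condition. Chaining these equivalences proves the proposition in both directions simultaneously.

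The argument presents no genuine obstacle; it is a purely formal reformulation once one has Korselt's criterion and the value of $\lambda$ on squarefree integers. The only point requiring a modicum of care is bookkeeping: one must carry the squarefreeness hypothesis along on both sides so that the closed form $\lambda(n) = \lcm_{i=1}^r \{p_i - 1\}$ is legitimately applicable at the step where the lcm-divisibility characterization is invoked.
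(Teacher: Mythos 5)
Your proof is correct and follows the same route the paper intends: the paper presents this proposition as an immediate reformulation of Korselt's criterion via the identity $\lambda(n) = \lcm_{p \mid n}\{p-1\}$ for squarefree $n$, which is exactly the equivalence you establish through the universal property of the least common multiple. No issues.
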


In this paper, we first consider the following question:
\begin{qn} \label{qn:main}
Given an integer $k$, for what integers $n > \max(k, 0)$ is $a^{n-k+1} \equiv a \pmod{n}$ for all integers $a$?
\end{qn}
\begin{notation}
We denote the set of positive integers $n$ that satisfy the condition in Question \ref{qn:main} by $C_k$:
\[ C_k = \{ n \in \ZZ : n > \max(k,0) \text{ and } a^{n-k+1} \equiv a \pmod{n} \text{ for all integers } a \}. \]
\end{notation}

Question \ref{qn:main} is a natural generalization of the question about the converse of Fermat's little theorem, which is obtained for $k=1$. Consequently, $C_1$ consists of all of the prime numbers and Carmichael numbers.

While writing up this paper, we discovered that Question \ref{qn:main} was explored in 1999 by Halbeisen and Hungerb{\"u}hler \cite{halbeisen1999generalised}. Many of our results on this question have been previously derived in that paper. However, since some of the results are new and some other results have different proofs, we have included most of our results, both old and new, with our own proofs.

In Subsection \ref{subsec:kgreater0}, we show that for squarefree $k>0$, $kp \in C_k$ for all primes $p \equiv 1 \mod \left( \frac{\lambda(k)}{\gcd(\lambda(k), k)}\right)$. Through numerical testing, we observe elements in $C_k$ that are not of this form. 
\begin{defn} \label{defn:generalizedcarmichael}
For squarefree $k>0$, we define the \emph{generalized Carmichael numbers} as
\[ N_k = C_k \setminus \left\{kp : p \equiv 1 \mod \left( \frac{\lambda(k)}{\gcd(\lambda(k), k)}\right) \right\}. \]
For the other values of $k$, we let $N_k = C_k$.
\end{defn}
Note that $N_1$ coincides with the set of Carmichael numbers. We interpret $N_k$ as a type of generalized Carmichael numbers since they do not exhibit a nice pattern with respect to the set of primes.

Next, we consider the growth rate of the counting functions
\[ C_k(X) = \lvert C_k \cap (0, X] \rvert \]
and
\[ N_k(X) = \lvert N_k \cap (0, X] \rvert. \]

From graphing $N_k(10^7)$ against $N_{-k}(10^7)$ for squarefree $2 \leq k \leq 1000$ in Figure \ref{fig:NvsN}, we give the following conjecture:
\begin{conj} \label{conj:NkvN-k}
    For all integers $k$,
    \[ \lim_{X \to \infty} \frac{N_{-k}(X)}{N_k(X)} = 1. \]
\end{conj}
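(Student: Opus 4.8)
The plan is to first replace membership in $C_k$ by a Korselt-type criterion, putting $N_k$ and $N_{-k}$ on a common footing. Arguing as in the classical case $k=1$: if $a^{n-k+1}\equiv a \pmod n$ for all $a$, then for any prime $p\mid n$ taking $a=p$ forces squarefreeness, since $p^{n-k+1}-p=p(p^{n-k}-1)$ has $p$-adic valuation exactly $1$ (as $n-k\ge 1$), so a factor $p^2\mid n$ is impossible. For squarefree $n$ the condition is equivalent prime by prime to $(p-1)\mid(n-k)$ for every $p\mid n$, i.e.\ to $\lambda(n)\mid n-k$. Thus
\[ C_k=\{\,n \text{ squarefree}: n>\max(k,0),\ \lambda(n)\mid n-k\,\}, \]
and likewise $C_{-k}$ is the set of squarefree $n$ with $\lambda(n)\mid n+k$. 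The conjecture therefore compares how often the residue $n\bmod\lambda(n)$ equals the prescribed value $+k$ versus $-k$.

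Next I would dispose of the subtracted families. For squarefree $k>0$ the removed set $\{kp : p\equiv 1 \pmod{\lambda(k)/\gcd(\lambda(k),k)}\}$ has counting function $\asymp X/\log X$ by Dirichlet's theorem on primes in arithmetic progressions, with an explicit constant; the corresponding correction on the $-k$ side (where $N_{-k}=C_{-k}$) is controlled by an analogous explicit family count. The whole point of passing from $C_k$ to $N_k$ is to strip away these arithmetic-progression contributions, which for $k=1$ already swamp the genuinely Carmichael-like part, so that what remains is the hard core. After this subtraction I would reduce the claim to comparing $\#\{n\le X \text{ squarefree}: \lambda(n)\mid n-k\}$ with $\#\{n\le X \text{ squarefree}: \lambda(n)\mid n+k\}$ up to the controlled lower-order terms just described.

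The heart of the matter is a symmetry/equidistribution statement, which I would approach through an Erdős-style model of the Carmichael counting function. Group the squarefree $n\le X$ by the value $L=\lambda(n)$, so that $n=\prod_{p\mid n}p$ with $\lcm_{p\mid n}(p-1)=L$, and count those $n$ falling into the residue class $k$, respectively $-k$, modulo $L$. The dependence on the sign of $k$ enters only through this target residue. The key fact I would try to exploit is that the space of admissible residues is stable under negation: the consistency constraints on $k\bmod L$ come from $\gcd(k,L)=\gcd(-k,L)$ together with the CRT decomposition of $L$, all invariant under $k\mapsto -k$; and one expects the products $\prod_{p\mid n}p$ to be equidistributed over the admissible classes modulo $L$. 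Granting such equidistribution uniformly, the main terms for $+k$ and $-k$ agree and the ratio tends to $1$.

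The hard part will be precisely this equidistribution of $n\bmod\lambda(n)$, which is essentially as deep as pinning down the true order of magnitude of the Carmichael counting function, a problem that remains open: even for $k=1$ one knows only upper and lower bounds for $N_1(X)$, not an asymptotic. I therefore do not expect an unconditional proof of the exact constant $1$. The realistic intermediate targets are (i) the weaker two-sided bound $N_{-k}(X)\asymp N_k(X)$, obtained by transporting the Alford--Granville--Pomerance lower-bound construction and the Erdős/Pomerance upper bounds symmetrically in $k\mapsto -k$, and (ii) the sharp limit $1$ \emph{conditionally} on a uniform equidistribution hypothesis for $n\bmod\lambda(n)$ over squarefree $n$. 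A fully unconditional resolution of the constant appears beyond current technology, which is why we record it only as a conjecture, supported by the numerical evidence of Figure \ref{fig:NvsN}.
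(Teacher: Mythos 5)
The statement you are addressing is stated in the paper as a conjecture, supported only by the numerical evidence of Figure \ref{fig:NvsN}; the paper offers no proof, and your proposal does not supply one either. Your preliminary reductions are fine and consistent with the paper's setup: the squarefreeness argument and the reduction to $\lambda(n)\mid n-k$ are exactly Proposition \ref{prop:korselt} and Corollary \ref{cor:generalizedkorselt}, and your accounting of the subtracted arithmetic-progression family in the definition of $N_k$ matches Definition \ref{defn:generalizedcarmichael} and the asymptotic $\phi(\lambda(k)/\gcd(\lambda(k),k))^{-1}\,(X/k)/\log(X/k)$ given in Section \ref{sec:conjCk}. But everything after that is a heuristic, not an argument. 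The entire content of the conjecture is concentrated in the step you label as ``the heart of the matter'': equidistribution of squarefree $n$ over the admissible residues of $n \bmod \lambda(n)$, uniformly enough to compare the two counting functions asymptotically. You correctly observe that the constraint set is invariant under $k\mapsto -k$, but symmetry of the \emph{admissible} classes does not by itself give equality of the counts, and no mechanism is offered for the equidistribution itself. You are candid that this is open and as hard as determining the true order of $N_1(X)$, so the honest conclusion is that there is no proof here --- which is precisely why the paper records the statement as Conjecture \ref{conj:NkvN-k}.

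One further caution on your proposed fallback targets: even the two-sided bound $N_{-k}(X)\asymp N_k(X)$ is out of reach with the tools in this paper. ``Transporting the Alford--Granville--Pomerance construction symmetrically in $k\mapsto -k$'' is not routine; the paper itself only establishes infinitude of $C_k$ for squarefree $k>0$ (Subsection \ref{subsec:kgreater0}), and for squarefree $k<0$ it gives merely a \emph{conditional} infinitude via Proposition \ref{prop:kless0}, contingent on a Wright-type theorem for $C_{-1}$ in arithmetic progressions that is not known. So for $k>0$ it is not even established that the denominatorless quantity $N_{-k}(X)$ tends to infinity, let alone that it is comparable to $N_k(X)$. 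Your item (ii), the limit $1$ conditional on a uniform equidistribution hypothesis, would be a reasonable thing to formulate precisely and prove as a conditional theorem, but as written the proposal stops short of doing even that.
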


We also observe that for the same value of $X$, $N_k(X)$ tends to be bigger when $k$ is squarefree and has many prime factors. Unfortunately, we do not give any quantitative statements on this phenomenon.

Erd\H{o}s \cite{erdos1956pseudoprimes} conjectured that the number of Carmichael numbers less than $X$ is $X^{1-o(1)}$. More precisely, from a series of assumptions, he conjectured that $N_1(X)$ should be asymptotically
\[ X \exp\left(-\frac{\log X \log\log\log X}{\log \log X} \right). \]

\begin{defn} \label{defn:dcoefficient}
Extending the original definition by Pomerance, Selfridge, and Wagstaff \cite{pomerance1980pseudoprimes}, we define $d_k(X)$ by
\[ N_k(X) = X \exp\left(-d_k(X) \frac{\log X \log\log\log X}{\log \log X} \right). \]
\end{defn}
Pomerance, Selfridge, and Wagstaff \cite{pomerance1980pseudoprimes} proved that $\liminf d_1(X) \geq 1$ and suggested that $\limsup d_1(X)$ might be $2$. Pomerance \cite{pomerance1981distribution,pomerance1989two} gave a heuristic argument suggesting that $\lim d_1(X) = 1$. Pinch \cite{pinch1993carmichael} describes the case for $k=1$ in more detail.

By graphing $d_k(10^7)$ for squarefree $-1000 \leq k \leq 1000$ in Figure \ref{fig:dvsd}, we find that $d_k(10^7)$ for these $k$ lie between $1$ and $2$. Unfortunately, we were unable to draw any conclusions about the limit $\lim_{X \to \infty} d_k(X)$ from our numerical tests.

Our second question comes from fixing the values of $a$ and $k$ in the congruence condition $a^{n-k} \equiv 1 \pmod{n}$. In general, fixing the value of $a$ enables more positive integers $n$ to satisfy the congruence condition. We ask the following question:
\begin{qn} \label{qn:fixeda}
Let $a \geq 2$ and $k$ be fixed integers. Do there exist infinitely many composite integers $n > \max(k,0)$ such that $a^{n-k} \equiv 1 \pmod{n}$?
\end{qn}

In 1970, Rotkiewicz \cite{rotkiewicz1970pseudoprime} asked about whether, for arbitrary fixed integers $a \geq 2$ and $k > 0$, there are infinity many positive integers $n$ such that $a^{n-k} \equiv 1 \pmod{n}$. The answer is affirmative in the case $k=1$; the positive composite integers $n$ such that $a^{n-1} \equiv 1 \pmod{n}$ are called \emph{pseudoprimes} to base $a$. In 1963, Makowski \cite{makowski1962generalization} proved that for $k>0$, there are infinitely many composite integers $n$ such that $a^{n-k} \equiv 1 \pmod{n}$ for any positive integer $a$ coprime to $n$. This result was proved earlier by Morrow \cite{morrow1951some} in the case $k=3$. In his proof, Makowski used the fact that there are infinitely many integers $n$ of the form $n = k\cdot p$ (where $p$ is a prime) such that $a^{n-k} \equiv 1 \pmod{n}$ for any integer $a$ if $\gcd(a,n) = 1$. Naturally, $\gcd(a,k) = 1$ for these numbers, and now the case for $\gcd(a,k) > 1$ remained open.

Rotkiewicz \cite{rotkiewicz1970pseudoprime, rotkiewicz1984congruence} showed that there are infinitely many positive integers $n$ such that $a^{n-k} \equiv 1 \pmod{n}$ if $k = 3$ and $a$ is an arbitrary fixed integer, or if $k=2$ and $a=2$. In 1987, Kiss and Phong \cite{kiss1987problem} gave a general solution for the case of $k>0$:
\begin{thm} \label{thm:kiss}
Let $a\geq 2$ and $k$ be fixed positive integers. Then there are infinitely many positive integers $n$ such that $a^{n-k} \equiv 1 \pmod{n}$.
\end{thm}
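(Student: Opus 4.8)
The plan is to recast the condition multiplicatively and then manufacture solutions recursively. Since $a^{n-k}\equiv 1 \pmod n$ forces $\gcd(a,n)=1$, the condition is equivalent to requiring $\gcd(a,n)=1$ together with $\ord_n(a)\mid n-k$, where $\ord_n(a)$ is the multiplicative order of $a$ modulo $n$. So it suffices to produce infinitely many $n$, coprime to $a$, with $\ord_n(a)\mid n-k$. I would keep in mind the clean sufficient condition: if $n\mid a^m-1$ and $m\mid n-k$ for some $m$, then $\ord_n(a)\mid m\mid n-k$, so such an $n$ works.

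The engine is a bootstrapping step that produces a larger solution from a given one. Suppose $M$ is a solution, set $e=\ord_M(a)$ (so $e\mid M-k$ and $\gcd(M,a)=1$), and let $p$ be a primitive prime divisor of $a^{e}-1$, i.e. one with $\ord_p(a)=e$, which exists by the theorem of Bang--Zsygmondy outside a short explicit list of exceptional $(a,e)$, chosen so that $p\nmid M$. Then $\ord_{Mp}(a)=\lcm(e,e)=e$ and, writing $Mp-k=M(p-1)+(M-k)$, both summands are divisible by $e$ since $e\mid p-1$ and $e\mid M-k$; hence $Mp$ is again a solution and $Mp>M$. To prevent the order from stagnating once the finitely many primitive divisors of $a^e-1$ are exhausted, I would instead choose a prime $\ell\mid M-k$ with $\ell\nmid ae$ and take $p$ primitive for $a^{e\ell}-1$, raising the order to $e\ell\mid M-k$; iterating then yields an infinite strictly increasing family of solutions, the bookkeeping that $M-k$ keeps a usable prime factor being routine and arrangeable in the construction. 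Thus the theorem reduces to exhibiting a single base solution for each pair $(a,k)$.

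Several ranges give a base solution immediately. If $a\ge k+2$, then $\ord_{a-1}(a)=1$ and $a-1>k$, so $n=a-1$ works. If $a=k+1$, then for every prime $p$ one has $a^p\equiv a\pmod p$, whence $\ord_{a^p-1}(a)=p$ and $(a^p-1)-k\equiv a-1-k=0\pmod p$, so $n=a^p-1$ works for all $p$ (already infinitely many). If $a\le k-1$ and $k+1-a$ has a prime factor $p$ with $a^p-1>k$, then $a\equiv k+1\pmod p$ gives $p\mid (a^p-1)-k$, so $n=a^p-1$ is again a base solution; the few leftover cases are handled by order-$2$ solutions dividing $a^2-1$ whenever the parity of the chosen divisor matches $k$.

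The main obstacle is the band around $a=k$, in particular small bases such as $a=k\in\{2,3\}$, where $\gcd(a,k)>1$ and none of the above closed forms applies; this is precisely the regime left open after Makowski's $\gcd(a,k)=1$ argument. Here I would build a two-prime solution $n=q_1q_2$. Writing $d_i=\ord_{q_i}(a)$ and using $q_i\equiv 1\pmod{d_i}$, the requirement $\lcm(d_1,d_2)\mid q_1q_2-k$ decouples into the coupled congruences $q_2\equiv k\pmod{d_1}$ and $q_1\equiv k\pmod{d_2}$. The hard part is realizing primes with a prescribed order and, simultaneously, a prescribed residue modulo the other prime's order; I expect to resolve this by letting the moduli grow and combining Dirichlet's theorem on primes in arithmetic progressions with Bang--Zsygmondy to secure, for infinitely many parameter choices, a pair $(q_1,q_2)$ meeting both congruences, together with a direct check of the finitely many genuinely small exceptional pairs such as $(a,k)=(2,2)$. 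Once any base solution is in hand, the bootstrapping step of the second paragraph completes the proof.
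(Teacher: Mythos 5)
First, note that the paper does not actually prove this theorem: it is quoted from Kiss and Phong, and the only piece of their argument recorded in the paper is Lemma \ref{lemma:kiss}, which says that a \emph{single} seed $m$ with $\ord_m(a)\mid m-k$ and $\ord_m(a)<m-k$ (outside two explicit exceptional cases) already generates infinitely many solutions. Your reduction of the congruence to $\gcd(a,n)=1$ together with $\ord_n(a)\mid n-k$ is correct, your $a=k+1$ case ($n=a^p-1$ for every prime $p$) is a nice self-contained infinite family, and your general strategy (seed solution plus a Zsygmondy-driven bootstrap) is exactly the Kiss--Phong mechanism. But your bootstrap is set up on the wrong exponent, and this creates a genuine gap. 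You apply Zsygmondy to $a^{e}-1$ with $e=\ord_M(a)$; since $a^e-1$ has only finitely many primitive prime divisors, the iteration stalls, and your repair --- pick a prime $\ell\mid M-k$ with $\ell\nmid ae$ and pass to $a^{e\ell}-1$ --- requires such an $\ell$ to exist, which it need not (take $M-k$ a power of $2$ with $a$ even, e.g.\ $a=6$, $k=1$, $M=5$, $M-k=4$). Calling this bookkeeping ``routine and arrangeable'' does not make it so; moreover, from your own base $M=a-1$ with $e=1$, every ``primitive prime divisor of $a^{1}-1$'' divides $M$ itself, so even the first step needs the repair. The fix Kiss and Phong use, and the idea you are missing, is to apply Zsygmondy to $a^{t}-1$ with $t=m-k$ rather than $t=\ord_m(a)$: a primitive prime $q$ of $a^{m-k}-1$ satisfies $q\equiv 1\pmod{m-k}$ and $q\nmid m$ (precisely because $\ord_m(a)<m-k$), the new solution $mq$ again satisfies the hypotheses with $mq-k>m-k$, and since the exponent strictly increases the Zsygmondy exceptions ($t=2$ with $a+1$ a power of $2$, or $t=6$ with $a=2$) are escaped permanently --- which is exactly why those two cases appear as exclusions in Lemma \ref{lemma:kiss}.

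The second gap is the case $a=k$ (and the adjacent cases where your closed-form seeds fail). There you propose a two-prime seed $n=q_1q_2$ requiring primes with a prescribed multiplicative order \emph{and} a prescribed residue modulo the other prime's order, to be obtained ``by combining Dirichlet's theorem with Bang--Zsygmondy.'' That is the entire difficulty of the theorem in this range, and ``I expect to resolve this'' is not an argument: Dirichlet controls the residue class of $q$ but not $\ord_q(a)$, and a primitive prime divisor of $a^d-1$ has controlled order but an uncontrolled residue modulo the other modulus, so the two requirements do not obviously decouple. Until a seed is actually exhibited for every pair $(a,k)$ with $k>0$ --- which is the substantive content of Kiss and Phong's paper beyond the lemma --- the proof is incomplete.
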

Our contribution to this problem is the extension of the work of Kiss and Phong to the case of $k \leq 0$, showing that for fixed integers $a \geq 2$ and $k$, there are infinitely many positive integers $n$ that $a^{n-k} \equiv 1 \pmod{n}$ if $(k, a) \neq (0,2)$. There are no integers $n>1$ satisfying $a^{n-k} \equiv 1 \pmod{n}$ if $(k,a) = (0,2)$.

The rest of the paper is structured as follows. Section \ref{sec:generalized} proves a generalization of Korselt's criterion, providing an equivalent condition for $n$ to satisfy the congruence condition in Question \ref{qn:main}. Section \ref{sec:special} looks at special cases of $k$ for Question \ref{qn:main}, in particular $k=0$, $k=1$, squarefree $k>0$, and squarefree $k<0$. Section \ref{sec:factorsnbyk} heuristically explains why a majority of the elements in $C_k$ share many factors with $k$. In Section \ref{sec:shortproducts}, we prove that there are finitely many integers in $C_k$ with one or two prime factors and finitely many elements in $C_k$ of the form $lpq$ where $l$ is a fixed integer and $p,q$ are primes, given certain restrictions. Section \ref{sec:conjCk} gives several conjectures on the growth rate of $C_k$, based on Erd\H{o}s's original conjecture on the growth rate of the Carmichael numbers \cite{erdos1956pseudoprimes}. Section \ref{sec:fixeda} builds on the work of Kiss and Phong to prove that for fixed integers $a \geq 2$ and $k$, there are infinitely many integers $n$ such that $a^{n-k} \equiv 1 \pmod{n}$ if and only if $(k,a) \neq (0,2)$. Finally, in Section \ref{sec:futurework}, we state a variant of Question \ref{qn:main} and prove some divisibility properties of the solutions to Question \ref{qn:notsqfree} that explain why for non-squarefree $k$, there are many integers $n$ such that $\lambda(n)$ divides $n-k$.

\section{Generalized Korselt's Criterion} \label{sec:generalized}

We prove a generalized version of Korselt's criterion.
\begin{prop} \label{prop:korselt}
An integer $n > \max(k,0)$ is in $C_k$ if and only if $n$ is squarefree and $\lambda(n)$ divides $n-k$.
\end{prop}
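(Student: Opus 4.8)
The plan is to prove both directions by reducing the congruence condition $a^{n-k+1}\equiv a\pmod n$ to statements about $\lambda(n)$, handling the units and non-units separately via the Chinese Remainder Theorem. The key structural fact is that $(\ZZ/n\ZZ)^\times$ has exponent $\lambda(n)$, so a unit $a$ satisfies $a^{m}\equiv 1$ for all $a$ if and only if $\lambda(n)\mid m$. I would set $m=n-k$ and aim to show that membership $n\in C_k$ forces squarefreeness first, then extract the divisibility $\lambda(n)\mid n-k$.

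**First I would** prove the ``if'' direction, which is the easy one. Suppose $n$ is squarefree and $\lambda(n)\mid n-k$. Write $n=p_1\cdots p_r$. For each prime $p_i$, I want $a^{n-k+1}\equiv a\pmod{p_i}$ for all $a$; by CRT this suffices. If $p_i\mid a$ then both sides are $0\pmod{p_i}$ (here I use that $n$ is squarefree, so $p_i$ appears to the first power and $a^{n-k+1}\equiv 0$, provided the exponent $n-k+1\geq 1$, which follows from $n>\max(k,0)$). If $p_i\nmid a$, then $a$ is a unit mod $p_i$, and since $p_i-1\mid\lambda(n)\mid n-k$, we get $a^{n-k}\equiv 1\pmod{p_i}$, hence $a^{n-k+1}\equiv a$. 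Reassembling via CRT gives $n\in C_k$.

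**The harder direction** is ``only if,'' and this is where I expect the main obstacle. Assume $n\in C_k$, so $a^{n-k+1}\equiv a\pmod n$ for all integers $a$. To force squarefreeness, suppose some prime $p$ satisfies $p^2\mid n$; then testing $a=p$ and using that $p^2\mid p^{n-k+1}-p = p(p^{n-k}-1)$ would require $p\mid p^{n-k}-1$ (since $n-k\geq 1$ makes $p^{n-k}\equiv 0$), which is impossible as $\gcd(p,p^{n-k}-1)=1$. The subtle point is ensuring the exponent $n-k$ is genuinely positive so that $p^{n-k}\equiv 0\pmod p$; this is exactly guaranteed by the hypothesis $n>\max(k,0)$, and I would flag this bound as the place where the size restriction on $n$ is essential. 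Once squarefreeness is established, I restrict to units: for any $a$ coprime to $n$, $a^{n-k+1}\equiv a$ gives $a^{n-k}\equiv 1\pmod n$, so the exponent $n-k$ annihilates every element of $(\ZZ/n\ZZ)^\times$. Taking $a$ to be an element of maximal order $\lambda(n)$ (which exists by the definition of the Carmichael function) forces $\lambda(n)\mid n-k$, completing the proof.

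**The one technical wrinkle** worth watching is the case $k\le 0$, where $n-k\ge n>0$ automatically and the exponent is large, versus $k>0$, where $n>k$ still keeps $n-k\ge 1$; in either regime the inequality $n>\max(k,0)$ delivers $n-k\ge 1$ and $n-k+1\ge 2$, so the non-unit computations behave uniformly. I do not anticipate needing anything beyond CRT, the structure theorem for $(\ZZ/n\ZZ)^\times$, and the stated closed form for $\lambda(n)$; the proof essentially mirrors the classical Korselt argument with $n-1$ replaced by $n-k$, and the only genuine care is in the squarefreeness step and in tracking that the shifted exponent stays positive.
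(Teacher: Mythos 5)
Your proof is correct and follows essentially the same route as the paper: an element of maximal order $\lambda(n)$ extracts the divisibility $\lambda(n)\mid n-k$, the test value $a=p$ with $p^2\mid n$ forces squarefreeness, and the Chinese Remainder Theorem handles the converse. The one place you diverge is the non-unit case of the ``if'' direction, where you verify $a^{n-k+1}\equiv a$ prime by prime modulo each $p_i\mid n$ rather than factoring $a=ga'$ with $g=\gcd(a,n)$ as the paper does; your version is slightly cleaner, since the paper's claim that $\gcd(a',n)=1$ can fail (e.g.\ $a=4$, $n=6$ gives $g=2$, $a'=2$), whereas the prime-by-prime CRT argument needs no such decomposition.
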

\begin{proof}
Suppose $n \in C_k$. By definition of the Carmichael function, there exists an element $a \in (\mathbb{Z}/n\mathbb{Z})^\times$ with order $\lambda(n)$. As $a$ is coprime to $n$, we have $a^{n-k} \equiv 1 \pmod{n}$. Then, the order of $a$ must divide the exponent on the left hand side, so $\lambda(n) \mid n-k$. Assume for the sake of contradiction that $n$ were not squarefree, so $p^2 \mid n$ for some prime $p$. Then we must have $p^{n-k+1} \equiv p \pmod{n}$, or $p^{n-k+1} \equiv p \pmod{p^2}$. Notice that $n-k+1 \geq 2$, so then $0 \equiv p \pmod{p^2}$. This is a contradiction.

Conversely, it is well-known that every element in $(\mathbb{Z}/n\mathbb{Z})^\times$ has order that divides $\lambda(n)$. Thus, if $\lambda(n) \mid n-k$, then $a^{n-k} \equiv 1 \pmod{n}$ holds for all $a$ coprime to $n$. 

Suppose $a$ is not coprime to $n$. We can write $a = ga'$ where $g = \gcd(a,n)$ and $\gcd(a', n) = 1$. From the previous paragraph, it follows that $(a')^{n-k+1} \equiv a' \pmod{n}$ so it suffices to prove that $g^{n-k+1} \equiv g \pmod{n}$. Notice that we can prime factorize $g=p_1 p_2 \cdots p_s$ and $n = p_1 p_2 \cdots p_t$ so that $p_i \neq p_j$ for all $i \neq j$ and $s \leq t$. This is a consequence of $g \mid n$ and $n$ being squarefree. It is clear that $g^{n-k+1} \equiv g \pmod{p_1p_2 \cdots p_s}$. Since $g$ is coprime to $p_{s+1} p_{s+2} \cdots p_t$ and $\lambda(p_{s+1} p_{s+2} \cdots p_t) \mid \lambda(n) \mid n-k$, we find that $g^{n-k+1} \equiv g \pmod{p_{s+1} p_{s+2} \cdots p_t}$. The Chinese Remainder Theorem implies that $g^{n-k+1} \equiv g \pmod{n}$ as desired.
\end{proof}
\begin{cor} \label{cor:generalizedkorselt}
An integer $n > \max(k,0)$ is in $C_k$ if and only if $n$ is squarefree and for all prime factors $p$ of $n$, $p-1$ divides $n-k$.
\end{cor}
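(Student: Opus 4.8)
The plan is to derive Corollary~\ref{cor:generalizedkorselt} as an essentially immediate consequence of Proposition~\ref{prop:korselt}, since the only difference between the two statements is that the single divisibility condition $\lambda(n) \mid n-k$ is replaced by the family of conditions $(p-1) \mid n-k$ ranging over all prime factors $p$ of $n$. Thus the entire content of the corollary is the equivalence, \emph{for squarefree $n$}, of these two formulations. First I would invoke Proposition~\ref{prop:korselt} to reduce the claim to showing that, when $n$ is squarefree, $\lambda(n) \mid n-k$ if and only if $(p-1) \mid n-k$ for every prime $p \mid n$.

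To establish this equivalence I would use the closed form for the Carmichael function from Proposition~1.5 (the second displayed proposition in the introduction), which gives $\lambda(n) = \lcm_{i=1}^r \{p_i - 1\}$ when $n = p_1 \cdots p_r$ is squarefree. With this in hand the argument is purely a statement about least common multiples: an integer $m$ (here $m = n-k$) is divisible by $\lcm_i\{p_i - 1\}$ if and only if it is divisible by each $p_i - 1$ individually. This is the defining universal property of the lcm, so both directions follow at once. For the forward direction, each $p_i - 1$ divides $\lcm_i\{p_i-1\} = \lambda(n)$, which divides $n-k$; for the reverse direction, if every $p_i - 1$ divides $n-k$ then $n-k$ is a common multiple of the $p_i - 1$ and hence is divisible by their least common multiple $\lambda(n)$.

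Since squarefreeness of $n$ appears identically in both Proposition~\ref{prop:korselt} and the corollary, that hypothesis simply carries over and requires no further work. There is genuinely no significant obstacle here: the proof is a two-line unpacking of the lcm characterization of $\lambda(n)$, and the only thing to be careful about is to state clearly that one is quantifying over \emph{all} prime factors of $n$ (so that the collection $\{p_i - 1\}$ appearing in $\lambda(n)$ matches the collection in the divisibility hypotheses). I would write the proof in a single short paragraph, citing Proposition~\ref{prop:korselt} for the reduction to the case where $n$ is already known to be squarefree, then citing the closed form of $\lambda(n)$ and the universal property of the lcm to conclude.
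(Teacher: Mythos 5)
Your proposal is correct and follows exactly the paper's own route: reduce to Proposition~\ref{prop:korselt} and use the identity $\lambda(n) = \lcm_{p \mid n}\{p-1\}$ for squarefree $n$ together with the universal property of the least common multiple. The paper's proof is just a one-sentence compression of the same argument.
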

\begin{proof}
This follows from the fact that $n \in C_k$ implies that $n$ is squarefree, and for squarefree $n$, $\lambda(n) = \lcm_{p \mid n}\{p-1\}$.
\end{proof}

\section{Special cases of \texorpdfstring{$k$}{k}} \label{sec:special}

\subsection{\texorpdfstring{$k=0$}{k = 0}}
Halbeisen and Hungerb\"{u}hler \cite{halbeisen1999generalised} proved that $C_0 = \{1,2,6,42,1806\}$.

\subsection{\texorpdfstring{$k=1$}{k = 1}}
By Fermat's little theorem and the definition of Carmichael numbers, $C_1$ consists of every prime and every Carmichael number. It has been shown that there are infinitely many Carmichael numbers \cite{alford1994there}, so $C_1$ contains infinitely many composite integers.

\subsection{Squarefree \texorpdfstring{$k > 0$}{k > 0}} \label{subsec:kgreater0}

It turns out most numbers in $C_k$ for a fixed, squarefree $k > 0$ follow a simple characterization, having $n = km$ for some positive integer $m$. As $n$ must be squarefree, we require $k$ to be squarefree and $m$ to be coprime to $k$.

Note that $\lambda(n) = \lcm(\lambda(k), \lambda(m))$. Thus, satisfying the generalized Korselt's criterion
is equivalent to satisfying two conditions:
\begin{equation} \label{eq:condition1}
    \lambda(k) \mid k(m-1)
\end{equation}
and
\begin{equation} \label{eq:condition2}
    \lambda(m) \mid k(m-1).
\end{equation}
Condition \eqref{eq:condition1} reduces to 
\[ \frac{\lambda(k)}{\gcd(\lambda(k), k)} \mid m-1, \]
which is a modulo condition on $m$. Condition \eqref{eq:condition2} can be better understood by noting that if $m$ is a prime or a Carmichael number, then the condition is satisfied because we have $\lambda(m) \mid m-1$. Then, having a factor of $k$ on the right side allows the right side to be divisible by $\lambda(m)$ more often. This allows us to define a more general type of Carmichael numbers: 

\begin{defn}
The \emph{$k$-Carmichael numbers} are positive, squarefree integers $m$ such that $\lambda(m) \mid k(m-1)$.
\end{defn}
\begin{rem}
The following are some basic properties of the $k$-Carmichael numbers.
\begin{enumerate}
    \item The $1$-Carmichael numbers consist of all of the primes and the Carmichael numbers. 
    \item If we have $r\mid s$ for some positive integers $r$ and $s$, then all $r$-Carmichael numbers are $s$-Carmichael numbers. 
    \item Every positive, squarefree integer $n$ is a $k$-Carmichael number for some $k > 0$.
\end{enumerate}
\end{rem}

Thus, for a fixed, squarefree $k > 0$, many of the positive integers $n$ such that $a^{n-k+1} \equiv a\pmod{n}$ for all integers $a$ are given by the following conditions:
\begin{itemize}
    \item $n = km$ for some positive, squarefree integer $m$ coprime to $k$.
    \item $m$ satisfies the modulo condition $m \equiv 1 \pmod{\frac{\lambda(k)}{\gcd(\lambda(k), k)}}$.
    \item $m$ is a $k$-Carmichael number. An easy way to satisfy this condition is to let $m$ be a prime number.
\end{itemize}
Dirichlet's prime number theorem tells us that there are infinitely many primes $m$ that satisfy the above three conditions. Thus, $\lvert C_k \rvert = \infty$ for squarefree $k > 0$.

\subsection{Squarefree \texorpdfstring{$k<0$}{k < 0}} \label{subsec:kless0}

We can apply a similar technique from the case of squarefree $k > 0$ to find a sufficient condition for the infinitude of $C_k$ for squarefree $k < 0$.
\begin{prop} \label{prop:kless0}
Let $k < 0$ be squarefree. If $n \in C_{-1}$, $n \equiv -1 \mod \left(\frac{\lambda(\lvert k \rvert)}{\gcd(\lambda(\lvert k \rvert), \lvert k \rvert)} \right)$, and $\gcd(n, \lvert k\rvert) = 1$, then $\lvert k \rvert n \in C_k$.
\end{prop}
\begin{proof}
The assumptions on $k$ and $n$ guarantee that $\lvert k \rvert n$ is squarefree. Now we use the generalized Korselt's criterion to show that $\lvert k \rvert n \in C_k$. We have
\begin{equation} \label{eq:modn}
    \lvert k \rvert n - k = \lvert k \rvert n + \lvert k \rvert = \lvert k \rvert(n+1) \equiv 0 \pmod{\lambda(n)}
\end{equation}
by our assumption that $n \in C_{-1}$. Moreover, the statement 
\begin{equation} \label{eq:modk-1}
    \lvert k \rvert n - k = \lvert k \rvert(n+1) \equiv 0 \pmod{\lambda(\lvert k \rvert)}
\end{equation}
is equivalent to a divisibility condition on $n+1$, namely $n \equiv -1 \mod \left(\frac{\lambda(\lvert k \rvert)}{\gcd(\lambda(\lvert k \rvert), \lvert k \rvert)}\right)$. As $\lvert k \rvert$ and $n$ are coprime, we have $\lcm(\lambda(\lvert k\rvert), \lambda(n)) = \lambda(\lvert k\rvert n)$ and so we can combine Equations \eqref{eq:modn}, \eqref{eq:modk-1} to obtain
\[ \lvert k \rvert n - k \equiv 0 \pmod{\lambda(\lvert k \rvert n)}, \]
which shows that $\lvert k \rvert n \in C_k$ by Proposition \ref{prop:korselt}.
\end{proof}

In 2012, Wright \cite{wright2012infinitely} proved that for any coprime $a,m \in \ZZ_{>0}$, there are infinitely many Carmichael numbers congruent to $a \pmod{m}$. If a similar result could be found for $C_{-1}$, then Proposition \ref{prop:kless0} would imply that $C_k$ is infinite for squarefree $k<0$ by picking $(a,m) = \left(-1, \frac{\lvert k \rvert \cdot \lambda(\lvert k \rvert)}{\gcd(\lambda(\lvert k \rvert), \lvert k \rvert)} \right)$. Here, $m$ is a multiple of $k$ to ensure that the guaranteed elements in $C_{-1}$ are coprime to $k$.

\section{Factors of \texorpdfstring{$n$}{n} based on \texorpdfstring{$k$}{k}} \label{sec:factorsnbyk}
In the previous sections, we saw that many elements in $C_{k}$ can be generated by multiplying $n \in C_{\pm 1}$ by $\lvert k \rvert$ under some modulo conditions on $n$.

Even if $n$ is not an integer multiple of $k$, it turns out that $n$ often shares factors with $k$, a pattern that is more visible with smaller factors. Suppose we are concerned whether $n$ and $k$ share a common divisor of $f$. If $n$ is divisible by a prime that congruent to $1 \pmod{f}$, then $f \mid \lambda(n)$. Proposition \ref{prop:korselt} implies that $k \equiv n \pmod{f}$. The hypothesis that $n$ is divisible by a prime that congruent to $1 \pmod{f}$ may appear to be rare at first, but as the size of $n$ increases, the average number of prime factors of $n$ increases as well. Then, it becomes rare for $n$ to have no prime factors that congruent to $1 \pmod{f}$. Asymptotically, $k \equiv n \pmod{f}$ holds. In particular, when $k$ has a factor of $f$, $n$ also has a factor of $f$. This heuristic is only broken when $n$ has only prime factors that not congruent to $1 \pmod{f}$.

\begin{ex}
    Many elements of $C_{-11}$ are congruent to $9 \pmod{10}$:
    \[C_{-11} = \{ \ldots, 28330\mathbf{9}, 30622\mathbf{9}, 31918\mathbf{9}, 33724\mathbf{9}, 35242\mathbf{9}, 38278\mathbf{9}, \ldots\}.\]
\end{ex}

The above heuristic is best observed for smaller factors $f$. Here, we consider $f=2, 4$ as they produce definitive restrictions on $n$.
\begin{prop}
For any $k$, let $n \in C_k$. If $n > 2$, then $k \equiv n \pmod{2}$.
\end{prop}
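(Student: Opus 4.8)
The plan is to apply the generalized Korselt criterion (Corollary \ref{cor:generalizedkorselt}) and split into cases based on the parity of $n$. Since $n \in C_k$, $n$ is squarefree and $p-1 \mid n-k$ for every prime $p \mid n$. The key observation is that the parity of $n$ is controlled by whether $2$ divides $n$, so I would first handle the case where $n$ is even and then the case where $n$ is odd.

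First, suppose $n$ is even. Since $n$ is squarefree, $2$ is one of its prime factors, and the corresponding condition from Corollary \ref{cor:generalizedkorselt} is $2-1 = 1 \mid n-k$, which is vacuous. So I instead look at another prime factor. Because $n > 2$ and $n$ is even and squarefree, $n$ has an odd prime factor $p$. Then $p - 1$ is even and divides $n - k$, so $n - k$ is even, giving $n \equiv k \pmod 2$; since $n$ is even this forces $k$ to be even as well, and $k \equiv n \pmod 2$ holds. The hypothesis $n > 2$ is exactly what guarantees the existence of such an odd prime factor (the only even squarefree $n$ with no odd prime factor is $n = 2$ itself), which is why the bound appears in the statement.

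Next, suppose $n$ is odd. Then every prime factor $p$ of $n$ is odd, so $p - 1$ is even; in particular, taking any prime factor $p \mid n$, we have $p - 1 \mid n - k$ with $p-1$ even, so $n - k$ is even, i.e. $n \equiv k \pmod 2$. Since $n$ is odd, this forces $k$ to be odd, and again $k \equiv n \pmod 2$. (If $n$ is odd one could worry about $n = 1$, but $n > 2$ rules this out, and in any case $n=1$ has no prime factors; the point is that for odd $n > 2$ at least one odd prime factor exists.)

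The two cases together establish $k \equiv n \pmod 2$ for all $n \in C_k$ with $n > 2$. I do not expect any genuine obstacle here: the argument is a direct unwinding of the divisibility condition $p - 1 \mid n - k$, using only that every odd prime $p$ contributes an even value $p-1$. The one subtlety worth stating carefully is the role of the hypothesis $n > 2$, which ensures that in the even case $n$ actually possesses an odd prime factor so that a nontrivial (even) divisibility condition $p-1 \mid n-k$ is available; without it the even prime $2$ alone yields only the vacuous condition $1 \mid n-k$.
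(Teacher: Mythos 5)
Your proof is correct and is essentially the paper's argument: both rest on the observation that a squarefree $n > 2$ must have an odd prime factor $p$, so $p-1$ is even and divides $n-k$, forcing $n \equiv k \pmod 2$. The case split on the parity of $n$ is harmless but unnecessary, since the two cases collapse to this single observation.
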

\begin{proof}
Since $n \in C_k$ is squarefree, if $n > 2$, then $n$ is divisible by some odd prime factor. This implies that $2 \mid \lambda(n)$, so by the generalized Korselt's criterion, $k \equiv n \pmod{2}$.
\end{proof}

\begin{prop}
Suppose that $4 \mid k$. Prime factors of elements in $C_k$ are either $2$ or congruent to $3 \pmod{4}$.
\end{prop}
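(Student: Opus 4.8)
The plan is to invoke the generalized Korselt's criterion (Corollary \ref{cor:generalizedkorselt}) and argue by contradiction. Recall that membership $n \in C_k$ forces $n$ to be squarefree and, for every prime $p \mid n$, forces the divisibility $p - 1 \mid n - k$. The squarefreeness half of this conclusion is the crucial ingredient, so I want to keep it in view throughout.

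First I would suppose, for contradiction, that some odd prime factor $p$ of $n$ satisfies $p \equiv 1 \pmod{4}$, and ask what this says about $n-k$. Since $p \equiv 1 \pmod 4$ gives $4 \mid p - 1$, and since Corollary \ref{cor:generalizedkorselt} provides $p - 1 \mid n - k$, transitivity of divisibility yields $4 \mid n - k$. Combining this with the standing hypothesis $4 \mid k$ then forces $4 \mid n$. But $4 = 2^2$, so $4 \mid n$ directly contradicts the squarefreeness of $n$, which is guaranteed by Proposition \ref{prop:korselt}. This is the entire heart of the argument.

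To finish, I would note that every odd prime is congruent to either $1$ or $3 \pmod 4$; having excluded the residue $1$, every odd prime factor of $n$ must be $\equiv 3 \pmod 4$, while the only possible even prime factor is $2$ itself. Hence the prime factors of any $n \in C_k$ are either $2$ or congruent to $3 \pmod 4$, as claimed.

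I do not expect a genuine obstacle in this proof: the only subtlety worth flagging is that one must use squarefreeness of $n$ (rather than, say, the divisibility $\lambda(n) \mid n-k$ directly) to derive the contradiction, since it is precisely the impossibility of $4 \mid n$ for squarefree $n$ that does the work. Everything else is a short chain of elementary divisibility deductions.
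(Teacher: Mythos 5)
Your argument is correct and matches the paper's proof essentially verbatim: a prime factor $p \equiv 1 \pmod 4$ forces $4 \mid n-k$, hence $4 \mid n$, contradicting the squarefreeness guaranteed by the generalized Korselt's criterion. No further comment is needed.
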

\begin{proof}
If $n$ has a prime factor that is congruent to $1 \pmod{4}$, then $k \equiv n \pmod{4}$, or simply $4\mid n$. This means that $n$ is not squarefree and $n \notin C_k$.
\end{proof}
The consequence of the previous proposition is that the growth rate of $C_k$ is much smaller when $4 \mid k$ compared to when $4 \nmid k$. More precisely, the counting function $C_k(X) = \lvert C_k \cap (0, X] \vert$ grows slower whenever $k$ is a multiple of $4$.

We believe that there should be a way to quantify this heuristic argument to compare the growth rate of $C_k(X)$ across different values of $k$.

\section{Short Products in \texorpdfstring{$C_k$}{C\_k}} \label{sec:shortproducts}
In this section, we investigate elements in $n \in C_k$ that are the products of few primes. We also show that if all but two prime factors of $n$ are fixed, then there are finitely many such $n$, excluding certain exceptions.

\subsection{One prime factor}

Halbeisen and Hunger\"{u}hler \cite[Proposition~4.1]{halbeisen1999generalised} showed that for any $k$, $C_k$ contains finitely many primes.

\subsection{Products of two primes}
Halbeisen and Hungerb\"{u}hler \cite[Proposition~4.4]{halbeisen1999generalised} determined that for any $k$, there are finitely many elements in $C_k$ with exactly two prime factors unless $k>0$ is prime. Here, we provide an alternate proof.
\begin{thm}
There are infinitely many integers in $C_k$ with exactly two prime factors if and only if $k>0$ is prime. In fact,
\begin{enumerate}
    \item For any $k$, there are finitely many elements in $C_k$ with exactly two prime factors both coprime to $k$. \label{itm:pq1}
    \item If $k < 0$, or $k>0$ is not prime, there are finitely many integers in $C_k$ with exactly two prime factors, at least one of which divides $k$. \label{itm:pq2}
\end{enumerate}
\end{thm}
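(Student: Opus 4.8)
The plan is to translate membership of a two-prime integer $n = pq$ (with $p \ne q$ prime) in $C_k$ into explicit divisibility conditions and then bound the two primes. By Corollary \ref{cor:generalizedkorselt}, $n = pq \in C_k$ iff $n$ is squarefree (automatic here) and $p - 1 \mid pq - k$ and $q - 1 \mid pq - k$. Since $pq \equiv q \pmod{p-1}$ and $pq \equiv p \pmod{q-1}$, these collapse to the symmetric pair
\[ p - 1 \mid q - k \qquad\text{and}\qquad q - 1 \mid p - k; \]
call these conditions $(\ast)$. For the \emph{if} direction (infinitude when $k>0$ is prime) I would exhibit an explicit family: take $p = k$ and let $q$ range over primes with $q \equiv 1 \pmod{k-1}$ and $q \ne k$. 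Then $q - 1 \mid p - k = 0$ is automatic, while $p - 1 = k-1 \mid q - k$ holds because $q \equiv 1 \equiv k \pmod{k-1}$. Dirichlet's theorem supplies infinitely many such $q$, and each $kq$ is squarefree and exceeds $k$, so $kq \in C_k$. These $n$ lie outside both items, as $p = k$ divides the prime $k$.

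Item \eqref{itm:pq2} is the short part. Fix a prime $p \mid k$; there are only finitely many such $p$ since $k \ne 0$. Under the hypothesis ($k < 0$, or $k > 0$ non-prime) we have $p \ne k$, so $p - k \ne 0$, and the second condition of $(\ast)$ reads $q - 1 \mid p - k$ with $p - k$ a fixed nonzero integer. Hence $q - 1 \le |p - k|$, which bounds $q$; together with the finitely many choices of $p$ this leaves only finitely many $n$.

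The bulk of the work is item \eqref{itm:pq1}, where $p, q$ are coprime to $k$, so $p \ne k \ne q$ and both $p - k$ and $q - k$ are nonzero. Assume $p < q$. From $q - 1 \mid p - k$ I get $q - 1 \le |p - k| \le p + |k|$, so the gap $d := q - p$ satisfies $1 \le d \le |k| + 1$. Substituting $q = p + d$ into $p - 1 \mid q - k$ and reducing modulo $p - 1$ gives $p - 1 \mid (d - k + 1)$. When $d - k + 1 \ne 0$ this forces $p - 1 \le |d - k + 1| \le 2|k| + 2$, bounding $p$ and then $q$, so only finitely many pairs survive. The genuine obstacle is the \textbf{degenerate case} $d - k + 1 = 0$, i.e. $q = p + k - 1$ (which needs $k \ge 2$): here the first divisibility becomes vacuous, so I would fall back on $q - 1 \mid p - k$, where $q - 1 = p + k - 2$ and $p - k \ne 0$ give $p + k - 2 \le |p - k|$; a short sign split ($p \ge k$ forces $k \le 1$, while $p < k$ forces $p \le 1$) shows this is impossible. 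Thus the degenerate case is empty and \eqref{itm:pq1} follows.

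Combining the two items yields the \emph{only if} direction, since every two-prime $n$ either has both factors coprime to $k$ or has a prime factor dividing $k$; the leftover value $k = 0$ is covered by the known finiteness of $C_0$. The subtleties to watch are the sign bookkeeping throughout (as $k$ may be negative) and, above all, ruling out the degenerate case in \eqref{itm:pq1}, where the first condition degenerates and one must re-invoke the complementary divisibility from $(\ast)$ to close the argument.
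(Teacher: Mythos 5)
Your proof is correct. Its skeleton matches the paper's: the same reduction of $pq\in C_k$ to the symmetric pair $p-1\mid q-k$, $q-1\mid p-k$; an identical treatment of item (2) (fix the finitely many primes $p\mid k$, note $p-k\neq 0$ under the hypothesis, and bound $q$ by $\lvert p-k\rvert+1$); and essentially the same infinite family for prime $k>0$ (the paper invokes its Section 3 construction $km$ with $m\equiv 1 \pmod{\lambda(k)/\gcd(\lambda(k),k)}$, which for prime $k$ is exactly your congruence $q\equiv 1\pmod{k-1}$). Where you genuinely diverge is the finiteness argument in item (1). The paper splits by the sign of $k$: for $k>0$ it extracts $p+q\le k+1$ from the two inequalities $\lvert p-1\rvert\le\lvert q-k\rvert$ and $\lvert q-1\rvert\le\lvert p-k\rvert$ (a case analysis it compresses into ``graphing the intersection''), while for $k<0$ it introduces the multiplier $\alpha$ in $\alpha(p-1)=q-k$, argues $\alpha\ge 2$, and concludes $q\le 4-3k$. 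You instead give a single sign-independent argument: bound the gap $d=q-p$ by $\lvert k\rvert+1$ using one divisibility, reduce the other modulo $p-1$ to get $p-1\mid d-k+1$, and dispose of the lone degenerate case $d=k-1$ (where that relation is vacuous) by falling back on the complementary divisibility $q-1\mid p-k$ and a sign split. This is cleaner and more uniform than the paper's two-branch version, and your explicit identification and elimination of the degenerate case makes rigorous a point that the paper's $k>0$ branch handles only implicitly; both routes produce effective bounds of comparable strength.
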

\begin{proof}
First, we prove statement \eqref{itm:pq1}. Suppose $pq \in C_k$ for distinct primes $p,q$ both coprime to $k$. By the generalized Korselt's criterion we obtain
\[ p-1 \mid pq - k, \quad q-1 \mid pq - k. \]
As $p \equiv 1 \pmod{p-1}$ and $q \equiv 1 \pmod{q-1}$, we get 
\[ p-1 \mid q - k, \quad q-1 \mid p - k. \]
The condition that $p,q$ are coprime to $k$ guarantees that the right side of each equation is nonzero. Thus, we obtain inequalities bounding the magnitude of each side:
\[ \lvert p-1 \rvert \leq \lvert q-k \rvert, \quad \lvert q-1 \rvert \leq \lvert p-k \rvert. \]
Graphing the intersection of the above inequalities reveals that for $k >0$, we must have $p+q \leq k+1$, thus there are finitely many integers in $C_k$ with two prime factors both coprime to $k$ if $k>0$.

For $k < 0$, we obtain $\lvert p-q \rvert \leq 1-k$. Without loss of generality, let $p < q$ to get
\begin{equation} \label{eq:pqk1}
p \geq q + k - 1.
\end{equation}
From $p-1 \mid q-k$, we obtain $\alpha(p-1) = q-k$ for some integer $\alpha$. As $p < q$ and $k \leq -1$, we must have $\alpha \geq 2$, or $2(p-1) \leq q-k$. Thus we obtain
\[ q \geq 2p + k - 2. \]
Solving this with Equation \eqref{eq:pqk1} gives $p < q \leq 4 - 3k$. Thus, there are only finitely many integers in $C_k$ with exactly two prime factors both coprime to $k$.

Now we prove statement \eqref{itm:pq2}. For primes $p,q$, let $pq \in C_k$. Without loss of generality, let $p \mid k$. Notice that there are finitely many possibles values of $p$. By Proposition \ref{prop:korselt} we obtain $q-1 \mid pq - k$. As $q \equiv 1 \pmod{q-1}$, we have $q-1 \mid p-k$. If $k<0$, or $k>0$ and $k$ is not prime, then $p-k \neq 0$, so $\lvert q-1 \rvert \leq \lvert p-k \rvert$. This bounds the value of $q$, which completes the proof of \eqref{itm:pq2}.

Noting that $C_0$ is finite, we have proved that there are finitely many integers in $C_k$ with exactly two prime factors unless $k>0$ is prime. In Section \ref{sec:special}, we showed that if $k>0$ is prime, then every integer in the form $n=k\cdot m$ for a prime $m \equiv 1 \pmod{\frac{\lambda(k)}{\gcd(\lambda(k), k)}}$ with $\gcd(k,m) = 1$ is in $C_k$. Thus, there are infinitely many integers in $C_k$ with exactly two prime factors if and only if $k>0$ and $k$ is prime. 
\end{proof}

\subsection{Products of three or more primes}

It is well-known that if $6n+1, 12n+1,$ and $18n+1$ are primes, then $(6n+1)(12n+1)(18n+1)$ is a Carmichael number. Given this fact, it may be plausible to conjecture that there are infinitely many numbers in $C_k$ which are products of three or more primes. However, we can prove that there are finitely many numbers in $C_k$ if we fix all but two of the primes (excluding some special cases). More precisely,
\begin{prop} \label{prop:lpq}
    For any integers $k$ and $l>0$, there are finitely many primes $p,q$ such that $lpq \in C_k$, $l \neq k$, $lp \neq k$, and $lq \neq k$.
\end{prop}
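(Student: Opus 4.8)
The plan is to use the generalized Korselt's criterion to reduce membership $lpq \in C_k$ to two divisibility conditions on the two free primes, and then to show these conditions are incompatible once both primes are large. By Corollary \ref{cor:generalizedkorselt}, $lpq \in C_k$ forces $p - 1 \mid lpq - k$ and $q - 1 \mid lpq - k$; since $p \equiv 1 \pmod{p-1}$ and $q \equiv 1 \pmod{q-1}$, these simplify to
\[ p - 1 \mid lq - k \qquad\text{and}\qquad q - 1 \mid lp - k. \]
I would write $lq - k = \alpha(p-1)$ and $lp - k = \beta(q-1)$ with $\alpha,\beta \in \ZZ$. The exclusions $lq \neq k$ and $lp \neq k$ make both right-hand sides nonzero, so $\alpha,\beta \neq 0$, and for large $p,q$ the quotients are positive integers, i.e.\ $\alpha,\beta \geq 1$.

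The crux is that $\alpha\beta$ is pinned to $l^2$. A direct computation gives the identity
\[ \alpha\beta - l^2 = \frac{l(l-k)(p+q) + (k^2 - l^2)}{(p-1)(q-1)}, \]
whose right-hand side tends to $0$ as $p,q \to \infty$ (numerator linear, denominator quadratic in $p,q$); hence there is a bound $N = N(k,l)$ beyond which $\lvert\alpha\beta - l^2\rvert < 1$, forcing the integer $\alpha\beta$ to equal $l^2$. I would then eliminate $q$ by substituting $lq = \alpha(p-1)+k$ into $lp = \beta(q-1)+k$ and clearing denominators, obtaining
\[ (l^2 - \alpha\beta)\,p = -\alpha\beta + \beta(k-l) + kl. \]
When $\alpha\beta = l^2$ the left side vanishes, so $\beta(k-l) = l(l-k)$; since $l \neq k$ I may cancel $k-l$ to get $\beta = -l < 0$, contradicting $\beta \geq 1$. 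Thus no solution can have both $p > N$ and $q > N$.

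It remains to finish the counting: every solution has $\min(p,q) \leq N$, so, say, $p$ takes one of finitely many values, and for each such $p$ the condition $q - 1 \mid lp - k$ together with $lp \neq k$ bounds $q$ through $\lvert q-1\rvert \leq \lvert lp - k\rvert$; the case $q \leq N$ is symmetric. This leaves only finitely many pairs $(p,q)$.

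I expect the main obstacle to be precisely the step $\alpha\beta = l^2$: the naive size bounds $\lvert p-1\rvert \leq \lvert lq-k\rvert$ and $\lvert q-1\rvert \leq \lvert lp-k\rvert$ fail to constrain the primes once $l \geq 2$, since they only give $p \lesssim lq$ and $q \lesssim lp$, so one must extract arithmetic information beyond magnitude. Routing it through the near-identity for $\alpha\beta$ and the ensuing sign contradiction is the decisive move; everything else is bookkeeping. It is worth recording that the three exclusions are genuinely needed — each of $l = k$, $lp = k$, $lq = k$ collapses $lpq$ into a two-prime multiple of $k$, which generates the infinite families produced in Section \ref{sec:special}.
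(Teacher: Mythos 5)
Your proof is correct, and it shares the paper's skeleton --- the generalized Korselt's criterion, the reduction to $p-1 \mid lq-k$ and $q-1 \mid lp-k$, and the parameters $\alpha,\beta$ --- but the decisive step is genuinely different. The paper solves for $q = 1 + \frac{l^2-lk+\alpha l - \alpha k}{\alpha\beta - l^2}$ and bounds this fraction uniformly over all integer pairs with $\alpha\beta \neq l^2$, which requires a slightly fussy case split on the size of $\lvert\alpha\rvert$ relative to $2l^2$ to keep the denominator bounded away from $0$; the exceptional case $\alpha\beta = l^2$ is handled separately there and yields $\alpha=\beta=-l$ and $l(p+q-1)=k$, with finitely many solutions. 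You instead prove the exact identity $\alpha\beta - l^2 = \frac{l(l-k)(p+q)+(k^2-l^2)}{(p-1)(q-1)}$ (which checks out), conclude that $\alpha\beta = l^2$ once both primes exceed a bound $N(k,l)$, and then kill that case by the sign contradiction $\beta = -l < 0$ against $\beta \geq 1$; this confines all solutions to the strip $\min(p,q)\leq N$, where the divisibility $q-1 \mid lp-k$ (nonzero right-hand side by hypothesis) finishes the count. Your route avoids the uniform bound over $(\alpha,\beta)$ entirely and makes the role of the hypothesis $l \neq k$ more transparent, since it is exactly what forces $\beta=-l$; the paper's route has the compensating advantage of producing explicit bounds on $\alpha\beta$ that feed into the subsequent proposition adapting Pinch's algorithm. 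The one point requiring care in your argument is that the limit step needs \emph{both} $p$ and $q$ large (the quotient $\frac{p+q}{(p-1)(q-1)}$ does not tend to $0$ if one prime stays bounded), but you handle this correctly by excluding only the region $p,q>N$ and treating $\min(p,q)\leq N$ via the divisibility bound.
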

\begin{proof}
As $lpq$ must be squarefree, we can assume that $l,p,q$ are pairwise coprime. Using Proposition \ref{prop:korselt} gives
\[ p-1 \mid lpq - k, \quad q-1 \mid lpq - k.\]
As $p \equiv 1\pmod{p-1}$ and $q \equiv 1 \pmod{q-1}$, we obtain
\[ p-1 \mid lq - k, \quad q-1 \mid lp - k.\]
Let $\alpha, \beta$ be integers such that
\begin{equation} \label{eq:lpqalphabeta}
     \alpha(p-1) = lq-k, \quad \beta(q-1) = lp - k.
\end{equation}
If $\alpha = 0$ or $\beta =0$, then $lq = k$ and $lp=k$, respectively, so we can just consider $\alpha, \beta \neq 0$.

Solve for $p$ in the first equation to get $p = \frac{lq-k}{\alpha} + 1$. Substitute this into the second equation to obtain
\begin{align*}
    \beta(q - 1) &= l\left(\frac{lq-k}{\alpha} + 1\right) - k, \\
    \alpha\beta(q - 1) &= l^2q - lk + \alpha l - \alpha k, \\
    q (\alpha \beta - l^2) &= \alpha\beta - lk + \alpha l - \alpha k.
\end{align*}

Suppose $\alpha \beta - l^2 = 0$. Then, $l^2 - lk + \alpha l - \alpha k = (l-k)(\alpha+l) = 0$. We may ignore $l=k$, so we get $\alpha = -l$ and similarly $\beta = -l$. Adding the two equations in \eqref{eq:lpqalphabeta} then gives $l(p+q-1) = k$. There are finitely many primes satisfying this equation with $p,q \geq 2$. Thus, we may assume $\alpha \beta - l^2 \neq 0$.

Solving for $q$:
\begin{align*}
    q &= \frac{\alpha\beta - lk + \alpha l - \alpha k}{\alpha\beta - l^2}, \\
    q &= 1 + \frac{l^2 - lk + \alpha l - \alpha k}{\alpha\beta - l^2}, \\
    q &= 1 + \frac{\frac{l^2 - lk}{\alpha} +l - k}{\beta - \frac{l^2}{\alpha}}.
\end{align*}

We claim that for fixed $l,k$, the final fraction has an upper bound for parameters $\alpha, \beta \in \ZZ$ as long as $\alpha\beta \neq l^2$. The magnitude of the numerator is bounded above by $\lvert l^2 \pm lk + l \pm k\rvert$. For the denominator, note that for $\lvert \alpha \rvert \geq 2l^2$, we have $-\frac{-1}{2} \leq \frac{l^2}{\alpha} \leq \frac{1}{2}$. Thus, for $\beta \geq 1$,
\[ \left\lvert \beta - \frac{l^2}{\alpha} \right\rvert = \beta - \frac{l^2}{\alpha} \geq 1 - \frac{1}{2} = \frac{1}{2}, \] 
and for $\beta \leq -1$,
\[ \left\lvert \beta - \frac{l^2}{\alpha} \right\rvert = -\left(\beta - \frac{l^2}{\alpha}\right) \geq 1 - \frac{1}{2} = \frac{1}{2}. \]
This proves the lower bound of the magnitude of the denominator for $\lvert \alpha \rvert \geq 2l^2$. 

For each $\alpha$ in the range $\lvert \alpha \rvert < 2l^2$, the denominator is nonzero, so we may lower bound magnitude of the denominator by a positive constant. Thus, the fraction is bounded above, and so is $q$. By symmetry, $p$ is also bounded above. This proves that there are finitely many primes $p,q$ such that $lpq \in C_k$, $l \neq k$, $lp \neq k$, and $lq \neq k$.
\end{proof}

In 1993, Pinch \cite{pinch1993carmichael} gave bounds on $\alpha$ and $\beta$ for $k=1$ to formulate an algorithm to enumerate the Carmichael numbers up to $10^{15}$. The following bounds that we derive for $\alpha$ and $\beta$ for the other values of $k$ can be used to modify Pinch's algorithm for $C_k$.

\begin{prop}
Let $P$ denote the largest prime factor of $l$ and assume that $P < p < q$ where $p,q$ are primes. If $lpq \in C_k$, $l \neq k$, $lp \neq k$, and $lq \neq k$, then there exist integers $\alpha, \beta$ such that
\[ p = 1 + \frac{(l-k)(l + \beta)}{\alpha\beta - l^2}, \]
\[ q = 1 + \frac{(l-k)(l + \alpha)}{\alpha\beta - l^2}. \]
If $l > k > 0$, then
\[ 0 < \alpha \beta \leq l^2 \left(\frac{P + 3}{P + 1}\right). \]
\end{prop}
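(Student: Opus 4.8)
The plan is to recycle the algebra already carried out in the proof of Proposition~\ref{prop:lpq} and then sharpen the resulting estimate. Since $lpq\in C_k$ is squarefree I may assume $l,p,q$ are pairwise coprime, and Proposition~\ref{prop:korselt} yields integers $\alpha,\beta$ with $\alpha(p-1)=lq-k$ and $\beta(q-1)=lp-k$. Exactly the manipulation in Proposition~\ref{prop:lpq} gives $q=1+\frac{l^2-lk+\alpha l-\alpha k}{\alpha\beta-l^2}$, and factoring the numerator as $l^2-lk+\alpha l-\alpha k=(l-k)(l+\alpha)$ produces the stated formula for $q$; the computation is symmetric under $(p,\beta)\leftrightarrow(q,\alpha)$, giving the formula for $p$. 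For the bound I first record signs: under $l>k>0$ one has $q\ge 3$, so $lq-k\ge l(q-1)+1>0$ and likewise $lp-k>0$, whence $\alpha,\beta>0$; and since $p-1=\frac{(l-k)(l+\beta)}{\alpha\beta-l^2}>0$ with a positive numerator, necessarily $\alpha\beta-l^2>0$. In particular $0<\alpha\beta$, and the displayed formulas make sense.

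The upper bound rests on two observations. First, from $p<q$ (so $p\le q-1$) and $k\ge 1$,
\[ \beta=\frac{lp-k}{q-1}\le\frac{l(q-1)-k}{q-1}<l, \]
so $\beta\le l-1$; combined with $l-k\le l-1$ this bounds the numerator by $(l-k)(l+\beta)\le (l-1)(2l-1)<2l^2$. Second, I claim $p-1\ge P+1$. Indeed $P$ is prime, and when $P\ge 3$ the integer $P+1$ is even and exceeds $2$, hence composite; as $p$ is a prime with $p>P$, this forces $p\ge P+2$, i.e.\ $p-1\ge P+1$. Writing $D=\alpha\beta-l^2=\frac{(l-k)(l+\beta)}{p-1}$ and inserting both estimates gives
\[ D\le\frac{(l-1)(2l-1)}{P+1}<\frac{2l^2}{P+1}, \]
so that $\alpha\beta=l^2+D<l^2+\frac{2l^2}{P+1}=l^2\cdot\frac{P+3}{P+1}$, as desired.

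It remains to dispose of $P=2$. Since $l$ divides the squarefree number $lpq$, $l$ is itself squarefree, so $P=2$ forces $l=2$; then $l>k>0$ leaves only $k=1$, and $2pq\in C_1$ would be an even composite element of $C_1$, i.e.\ an even Carmichael number, which cannot exist because an odd prime factor $p\mid 2pq$ makes $p-1$ an even divisor of the odd number $2pq-1$. Thus the case $P=2$ is vacuous. The one genuinely delicate point is obtaining the denominator $P+1$ rather than the $P$ that the crude estimate $p-1\ge P$ would give: that naive bound only yields $\alpha\beta<l^2\,\frac{P+2}{P}$, which is strictly weaker. The gain comes entirely from the parity remark that a prime $p>P$ can never equal $P+1$ once $P$ is odd, so the smallest admissible value of $p-1$ is $P+1$; checking that this single observation suffices—and that the excluded $P=2$ contributes nothing—is the crux.
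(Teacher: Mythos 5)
Your proof is correct and follows essentially the same route as the paper's: extract the closed forms for $p$ and $q$ from the algebra of Proposition~\ref{prop:lpq}, establish $\alpha\beta-l^2>0$ from sign considerations, bound the numerator $(l-k)(l+\beta)$ by $2l^2$, and divide by $p-1\ge P+1$. The one place you go beyond the paper is in actually justifying the step $p-1\ge P+1$ (the paper asserts it without comment): your parity observation that a prime $p>P$ cannot equal $P+1$ when $P$ is odd, together with the check that $P=2$ forces $l=2$, $k=1$ and an impossible even element of $C_1$, is a genuine and welcome patch.
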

\begin{proof}
The existence of $\alpha$ and $\beta$ was shown in the proof of Proposition \ref{prop:lpq}.

Since $\frac{q-2}{p-1} \geq 1$ and $\frac{p}{q-1} \leq 1$, we may bound $\alpha$ and $\beta$:
\[ \alpha = \frac{lq-k}{p-1} = l\cdot \frac{q-2}{p-1} + \frac{2l-k}{p-1} \geq l + \frac{2l -k}{p-1},\]
\[ \beta = \frac{lp-k}{q-1} = l\cdot \frac{p}{q-1} - \frac{k}{q-1} \leq l - \frac{k}{q-1}.\]
From the proof of Proposition \ref{prop:lpq}, we have
\begin{align*}
	(\alpha \beta - l^2)(q-1) &= (l-k)(\alpha + l) \\
   		&\geq (l-k)\left(2l + \frac{2l-k}{p-1}\right) \\
        &\geq (l-k)\left(\frac{2lp-k}{p-1}\right).
\end{align*}
Thus, if $l > k$, then $\alpha \beta - l^2 > 0$.

In addition, we have
\[ P + 1 \leq p-1 = \frac{(l-k)(\beta + l)}{\alpha \beta - l^2} \leq \frac{(l-k)\left(2l - \frac{k}{q-1}\right)}{\alpha \beta - l^2}. \]
Thus,
\[\alpha\beta - l^2 \leq \frac{(l-k)\left(2l - \frac{k}{q-1}\right)}{P+1}, \]
or simplified more,
\[ \alpha\beta\leq l^2 \left(\frac{P + 1 + (1-\frac{k}{l})\left(2 - \frac{k}{l(q-1)}\right)}{P+1} \right). \]
The simplest case to consider is $k > 0$, in which $(1-\frac{k}{l})\left(2 - \frac{k}{l(q-1)}\right) \leq 2$. Thus, if $l > k > 0$, then
\[ 0 < \alpha \beta \leq l^2 \cdot\frac{P + 3}{P + 1}. \]
\end{proof}

We can also prove that there are finitely many integers $n \in C_k$ satisfying a particular divisibility condition on their prime factors.

\begin{prop}
If $k$ has at least $3$ prime factors (counted with multiplicity), there are finitely many primes $p,q,r$ such that $pqr \in C_k$, $p-1 \mid q-1$, and $p-1 \mid r-1$.
\end{prop}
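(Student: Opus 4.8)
The plan is to reduce the statement to Proposition~\ref{prop:lpq} after first using the two divisibility hypotheses to pin down the prime $p$. Since $pqr \in C_k$ forces $pqr$ to be squarefree, the primes $p,q,r$ are automatically distinct. First I would exploit the congruences $q \equiv 1$ and $r \equiv 1 \pmod{p-1}$ coming from the hypotheses $p-1 \mid q-1$ and $p-1 \mid r-1$. Together with $p \equiv 1 \pmod{p-1}$, these give $pqr \equiv 1 \pmod{p-1}$. On the other hand, the generalized Korselt criterion (Corollary~\ref{cor:generalizedkorselt}) yields $p-1 \mid pqr - k$, and comparing the two residues collapses this to
\[ p - 1 \mid k - 1. \]
Because $k$ has at least three prime factors counted with multiplicity, $\lvert k \rvert \geq 8$, so $k \neq 1$ and $k-1 \neq 0$. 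Hence $p-1 \leq \lvert k-1 \rvert$, leaving only finitely many possibilities for $p$.

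The second step is to fix one such $p$ and apply Proposition~\ref{prop:lpq} with its fixed integer $l$ set equal to $p$ and its two free primes set equal to $q$ and $r$. That proposition produces finitely many pairs $(q,r)$ with $pqr \in C_k$, provided the three exceptional equalities $p = k$, $pq = k$, and $pr = k$ are avoided. This is exactly where the hypothesis on $k$ does the work: $p$ is a single prime while $k$ is composite with $\lvert k \rvert \geq 8$, so $p \neq k$; and $pq$, $pr$ each have precisely two prime factors counted with multiplicity, whereas $k$ has at least three, so $pq \neq k$ and $pr \neq k$. (For $k<0$ these inequalities are immediate, since $p$, $pq$, $pr$ are all positive.) Thus every exceptional case is ruled out automatically, and Proposition~\ref{prop:lpq} applies for each admissible $p$.

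Combining the two steps finishes the argument: there are finitely many $p$, and for each the number of admissible $(q,r)$ is finite, so the total number of triples $(p,q,r)$ is finite. I do not anticipate a genuine obstacle; the only points requiring care are the bookkeeping in the first step — confirming that $pqr \equiv 1 \pmod{p-1}$ really does reduce the Korselt divisibility to $p-1 \mid k-1$ — and the verification that the three-prime-factor hypothesis is precisely what excludes the exceptional families of Proposition~\ref{prop:lpq}. Indeed, this hypothesis cannot be weakened to ``$k$ has at least two prime factors,'' since then $pq = k$ could occur and would reintroduce an infinite family.
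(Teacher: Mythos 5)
Your proposal is correct and follows essentially the same route as the paper: use the Korselt divisibility $p-1 \mid pqr-k$ together with $p\equiv q\equiv r\equiv 1 \pmod{p-1}$ to bound $p$, then invoke Proposition~\ref{prop:lpq} with $l=p$, noting that the three-prime-factor hypothesis on $k$ rules out the exceptional equalities $p=k$, $pq=k$, $pr=k$. (Your reduction to $p-1 \mid k-1$ is in fact the correct form of the congruence — the paper writes $p-1\mid k$, apparently a slip — but either way $|k|\geq 8$ gives finitely many $p$.)
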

\begin{proof}
Let $pqr \in C_k$ for primes $p,q,r$ such that $p-1 \mid q-1$ and $p-1 \mid r-1$. From the generalized Korselt's criterion, we get $p-1 \mid pqr - k$. As $p \equiv q \equiv r \equiv 1 \pmod{p-1}$, this simplifies to $p-1 \mid k$. This means that there are finitely many possible values for $p$. Because $p$, $pq$, and $pr$ have no chance of being equal to $k$, Proposition \ref{prop:lpq} shows that for each of those values of $p$, there are finitely many possible values of $q$ and $r$ such that $pqr \in C_k$. This completes the proof.
\end{proof}

\section{Conjectures on the Growth Rate of \texorpdfstring{$C_k$}{C\_k}} \label{sec:conjCk}

In this section, we state several conjectures on the growth rate of the counting functions
\[ C_k(X) = \lvert C_k \cap (0, X] \rvert \]
and
\[ N_k(X) = \lvert C_k \cap (0, X] \rvert \]
where $N_k$ is the set of generalized Carmichael numbers from Definition \ref{defn:generalizedcarmichael}.

The prime number theorem on arithmetic progressions states that for coprime integers $a$ and $m$, the number of primes congruent to $a \pmod{m}$ less than $X$ is asymptotically $\phi(m)^{-1} \frac{X}{\log X}$ where $\phi$ is Euler's totient function.

Thus, for squarefree $k > 0$, the counting function $N_k(X) = N_k \cap (1, X]$ is asymptotically
\[ C_k(X) - \phi\left(\frac{\lambda(k)}{\gcd(\lambda(k), k)}\right)^{-1} \cdot \frac{X/k}{\log(X/k)}. \]
For the other values of $k$, $N_k(X) = C_k(X)$ by definition.

Figure \ref{fig:Nvsk} shows $N_k(10^7)$ for squarefree $2 \leq k \leq 1000$. Figure \ref{fig:Nvs-k} shows $N_{-k}(10^7)$ for squarefree $2 \leq k \leq 1000$. Figure \ref{fig:NvsN} shows $N_{k}(10^7)$ versus $N_{-k}(10^7)$ for squarefree $2 \leq k \leq 1000$; note that the horizontal axis is $N_k(10^7)$ and the vertical axis is $N_{-k}(10^7)$. Each data point is color-coded based on the number of prime factors of $k$.

\begin{figure}
    \centering
    \includegraphics[width=\linewidth]{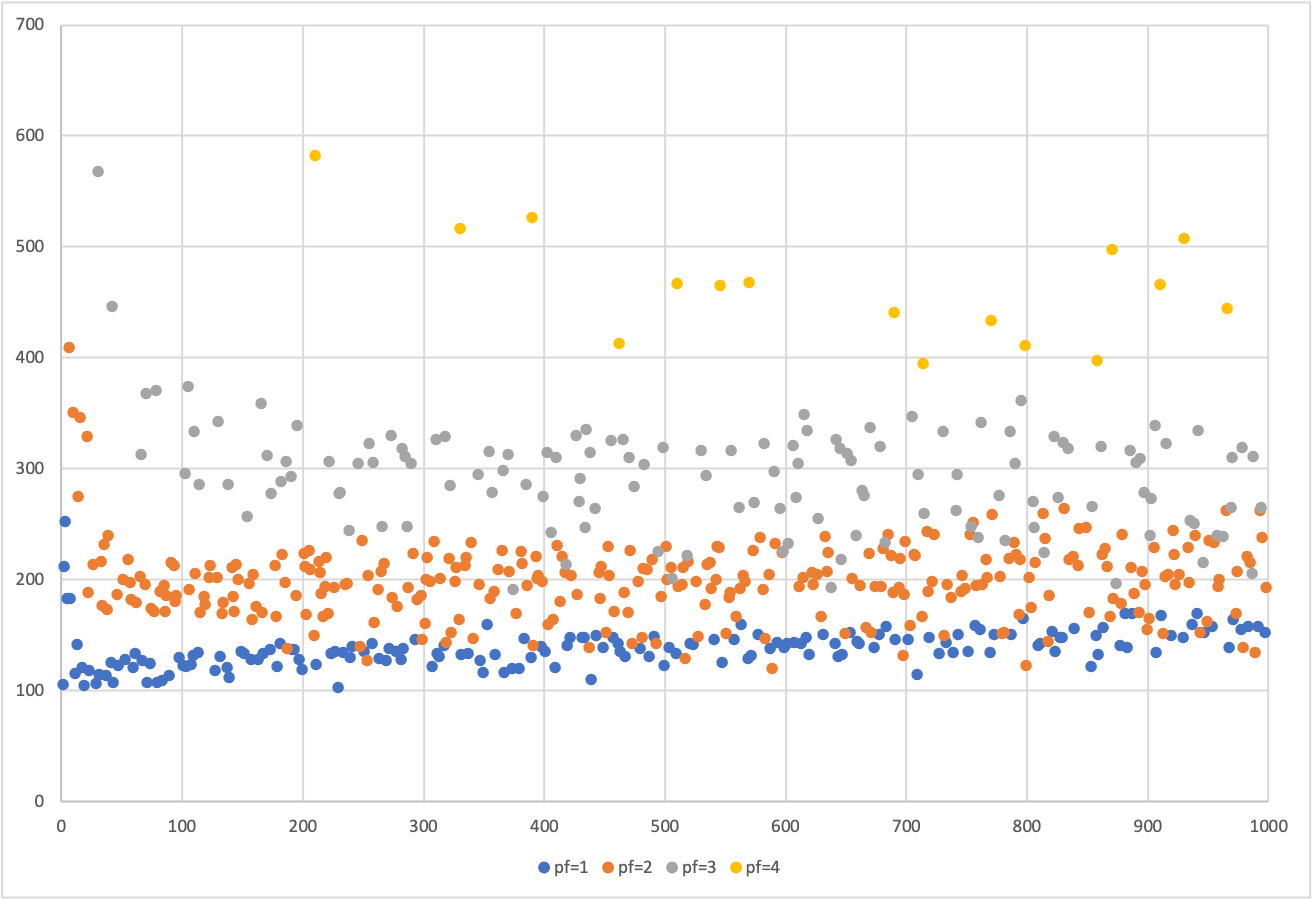}
    \caption{$N_k(10^7)$ for squarefree $2 \leq k \leq 1000$.}
    \label{fig:Nvsk}
\end{figure}
\begin{figure}
    \centering
    \includegraphics[width=\linewidth]{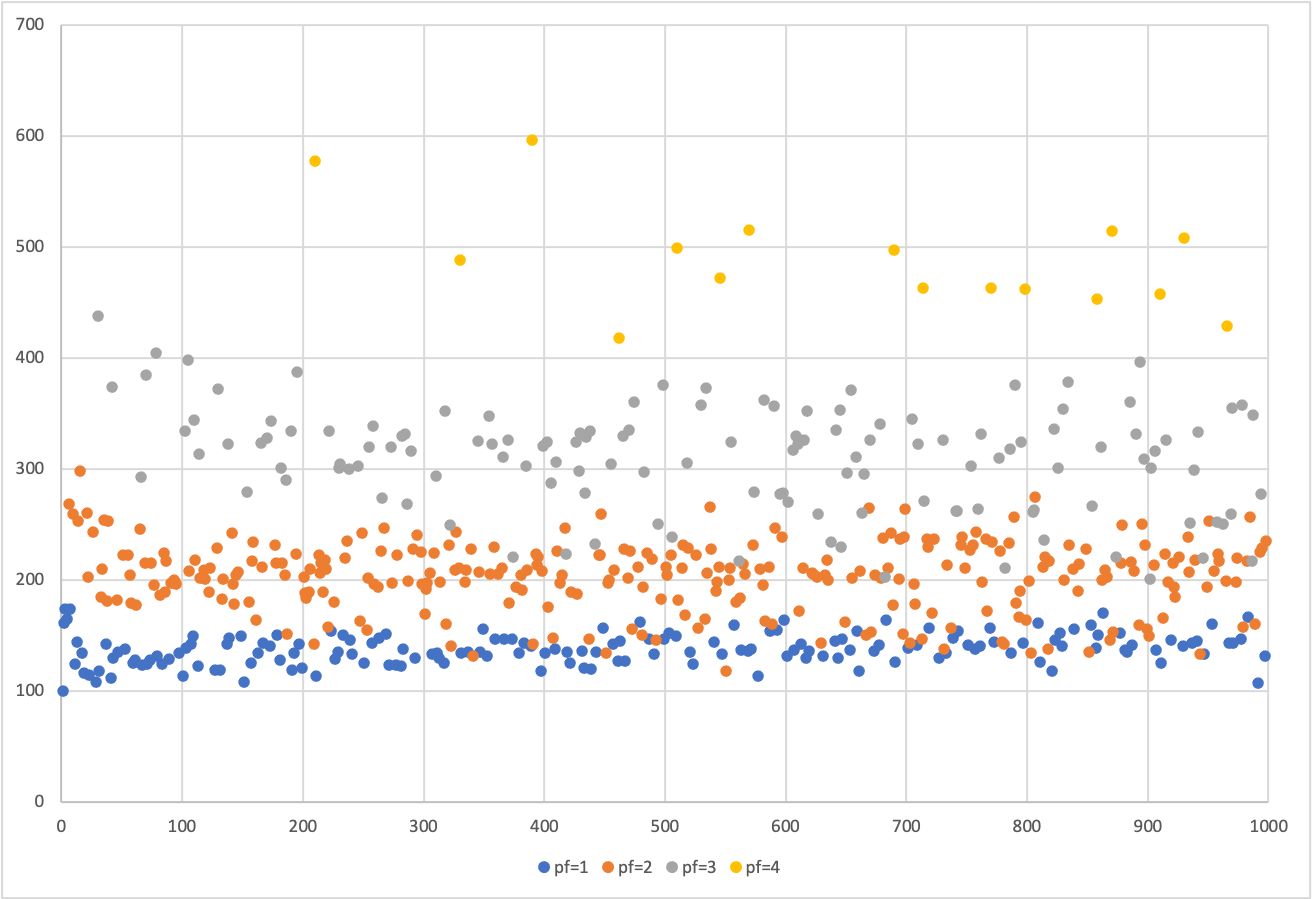}
    \caption{$N_{-k}(10^7)$ for squarefree $2 \leq k \leq 1000$.}
    \label{fig:Nvs-k}
\end{figure}
\begin{figure}
    \centering
    \includegraphics[width=\linewidth]{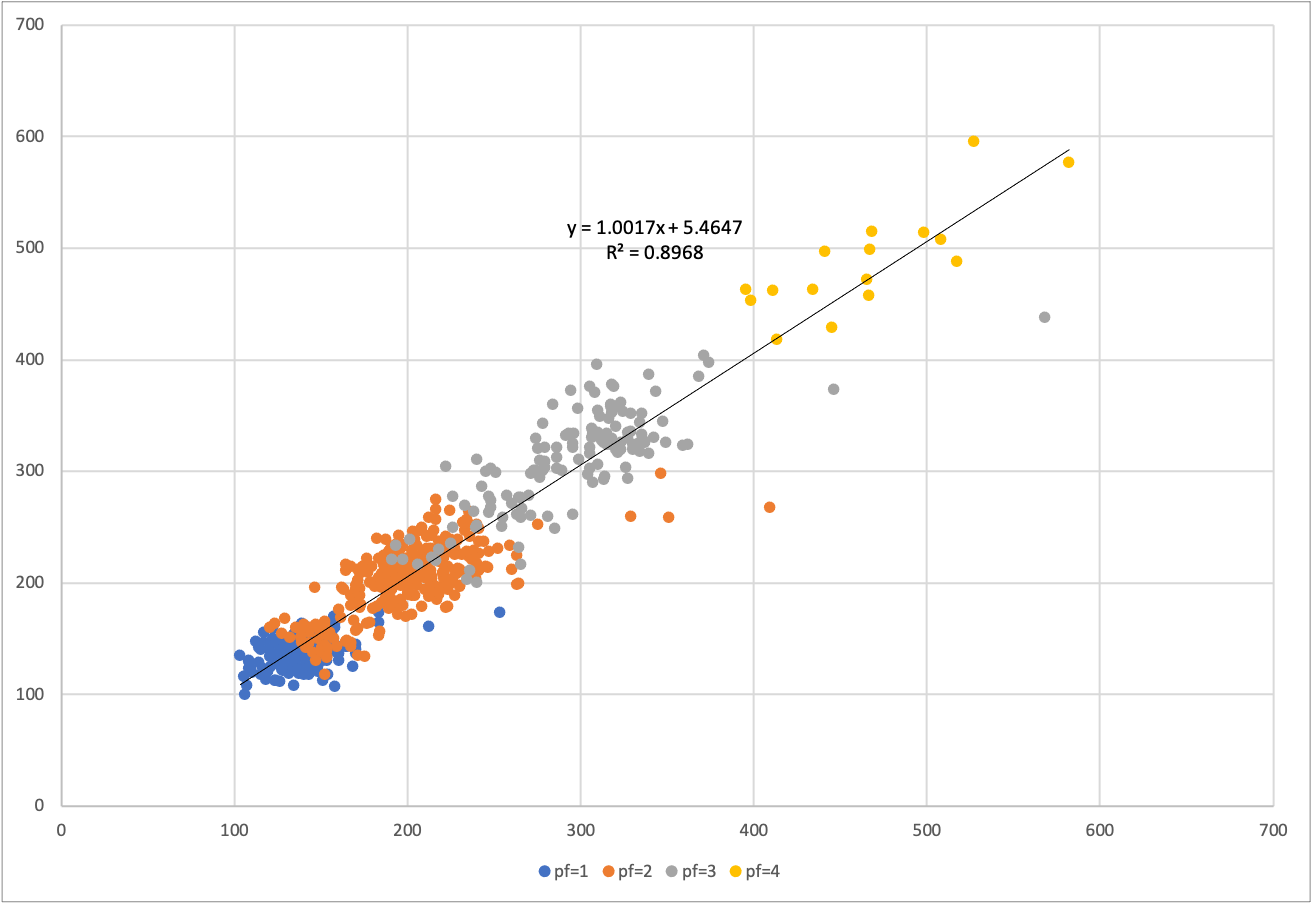} 
    \caption{$N_{-k}(10^7)$ vs. $N_{k}(10^7)$ for squarefree $2 \leq k \leq 1000$.}
    \label{fig:NvsN}
\end{figure}

Figures \ref{fig:Nvsk} and \ref{fig:Nvs-k} illustrate that for the same value of $X$, $N_k(X)$ and $N_{-k}(X)$ tend to be bigger if $k$ has many prime factors.

Moreover, Figure \ref{fig:NvsN} shows that $N_k(X)$ and $N_{-k}(X)$ seem to grow at similar rates with respect to $X$. Our tests show that $N_k(X)$ and $N_{-k}(X)$ are similar even when $k$ is non-squarefree; unfortunately, this is hard to show because $N_k(X)$ is small when $k$ is non-squarefree. Nevertheless, we conjecture (Conjecture \ref{conj:NkvN-k}) that for all integers $k$,
\[ \lim_{X \to \infty} \frac{N_{-k}(X)}{N_k(X)} = 1. \]

In Figure \ref{fig:dvsd}, we graph the function $d_k(10^7)$ against $d_{-k}(10^7)$ ($x$-axis and $y$-axis, respectively) where $d_k(X)$ is the function defined in Definition \ref{defn:dcoefficient}. Re-expressing $N_k(X)$ as $d_k(X)$ seems to re-scale the data such that the points are more uniform along the regression line. Moreover, the re-scaling seems to clarify the existence of four clusters of data points, formed by distinguishing the number of primes factors of $k$.

\begin{figure}
    \centering
    \includegraphics[width=\linewidth]{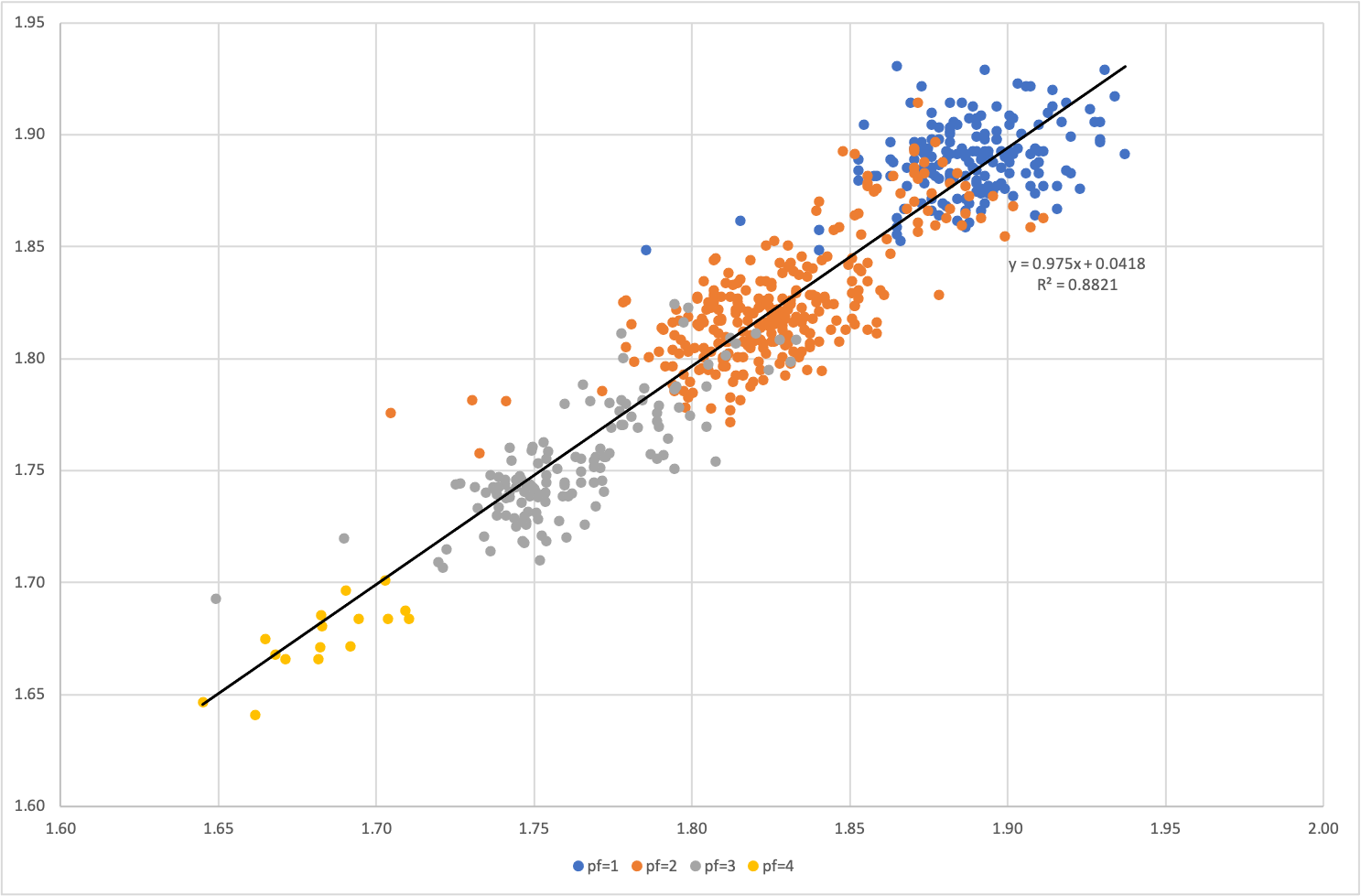}
    \caption{$d_{-k}(10^7)$ vs. $d_{k}(10^7)$ for squarefree $2 \leq k \leq 1000$.}
    \label{fig:dvsd}
\end{figure}
	    
Unfortunately, although $d_k(10^7)$ is generally smaller when $k$ has many prime factors, it is not clear what the limit $\lim_{X \to \infty} d_k(X)$ may look like with respect to $k$, if it exists at all.

\section{For fixed base \texorpdfstring{$a$}{a}} \label{sec:fixeda}
In this section, we extend the Kiss and Phong's proof of Theorem \ref{thm:kiss} to prove that for fixed integers $a \geq 2$ and $k$, there are infinitely many integers $n$ such that $a^{n-k} \equiv 1 \pmod{n}$ if and only if $(k, a) \neq (0,2)$. First, we note that the following key lemma of Kiss and Phong \cite{kiss1987problem}:

\begin{lemma} \label{lemma:kiss}
Let $a,k,m$ be positive integers satisfying $a > 1$, $m-k>1$, and $\gcd(a,m) = 1$. If $\ord_m(a) \mid m-k$ but $\ord_m(a) < m-k$, then there are infinitely many positive integers $n$ that satisfy $a^{n-k} \equiv 1 \pmod{n}$, unless $m-k=2$ and $a+1$ is a power of $2$, or $m-k = 6$ and $a=2$.
\end{lemma}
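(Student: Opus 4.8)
The plan is to recognize that the two exceptional cases in the statement are \emph{exactly} the exceptional cases of Zsygmondy's theorem (Bang's theorem), and to produce the infinitely many $n$ by a ``multiply by a primitive prime divisor'' bootstrap. Throughout write $e = m-k$, so the hypotheses read $e>1$, and $d := \ord_m(a)$ satisfies $d \mid e$, $d < e$, and $m \mid a^d - 1$. The heart of the argument is a single step: from a valid triple $(a,k,m)$ of this type that is not in an exceptional configuration, I would build a strictly larger $m'$ of the same type.

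\textbf{The single step.} Since $a>1$, $e>1$, and we are outside the two excluded cases, Zsygmondy's theorem furnishes a \emph{primitive prime divisor} $p$ of $a^e-1$: a prime with $p \mid a^e-1$ but $p \nmid a^j-1$ for $0<j<e$, equivalently $\ord_p(a)=e$. Then $e \mid p-1$, so $p \equiv 1 \pmod{e}$, and $\gcd(a,p)=1$. Because $p$ is primitive and $d<e$, we have $p \nmid a^d-1$; as $m \mid a^d-1$ this forces $\gcd(m,p)=1$. Set $m' = mp$. By the Chinese Remainder Theorem $\ord_{m'}(a) = \lcm(\ord_m(a),\ord_p(a)) = \lcm(d,e) = e$. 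The key congruence is then immediate: $m \equiv k \pmod{e}$ by definition of $e$, and $p \equiv 1 \pmod{e}$, so $m' = mp \equiv k \pmod{e}$, giving $e \mid m'-k$, i.e. $\ord_{m'}(a) \mid m'-k$ and hence $a^{m'-k}\equiv 1 \pmod{m'}$. Moreover $\ord_{m'}(a) = e = m-k < mp-k = m'-k$, so the divisibility is strict and $m'$ again satisfies all hypotheses of the lemma (and is composite, being a product of at least two primes).

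\textbf{Iteration and escaping the exceptions.} Starting from the given $m = m_0$, which by the ``unless'' clause is not exceptional, the step yields a strictly increasing chain $m_0 < m_1 < m_2 < \cdots$ with $a^{m_i - k} \equiv 1 \pmod{m_i}$ for every $i$; these are the desired $n$, \emph{provided the step can be repeated indefinitely}. The only obstruction is re-entering a Zsygmondy exception, which requires the relevant exponent to lie in $\{2,6\}$. Here I would record the crude bound $m'-k = mp-k = ep + k(p-1) \ge 2\cdot 3 + 1\cdot 2 = 8$, using $k \ge 1$, $e \ge 2$, and $p \ge e+1 \ge 3$. Thus after the very first application every exponent $m_i - k$ exceeds $6$, so the exceptional cases can never recur; the step applies at every later stage and the sequence is infinite.

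\textbf{Main obstacle.} The genuine difficulty is conceptual rather than computational: identifying the correct construction. Once one sees that the listed exceptions ($e=2$ with $a+1$ a power of $2$, and $(a,e)=(2,6)$) are precisely Zsygmondy's, choosing $p$ to be a primitive prime divisor makes $\ord_{mp}(a)=e$ and the congruence $mp \equiv k \pmod{e}$ fall out automatically. Beyond invoking Zsygmondy, the only point needing verification is the bound $m'-k \ge 8$ that prevents the iteration from stalling, which is elementary.
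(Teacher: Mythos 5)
Your proof is correct. Note that the paper itself does not prove this lemma --- it is quoted verbatim from Kiss and Phong \cite{kiss1987problem} --- so there is no in-paper argument to compare against; your reconstruction (take a primitive prime divisor $p$ of $a^{m-k}-1$ via Zsygmondy/Birkhoff--Vandiver, pass to $m'=mp$, check $\ord_{m'}(a)=m-k$ divides $m'-k$ strictly, and use the bound $m'-k\ge 8$ to show the exceptional exponents $2$ and $6$ never recur) is essentially the original Kiss--Phong argument, and all steps, including the coprimality $\gcd(m,p)=1$ and the iteration, check out.
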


This lemma generalizes for all integers $k$. Moreover, the same proof holds since the argument is purely algebraic.

\begin{lemma} \label{lemma:generalkiss}
Let $a,k,m$ be integers satisfying $a > 1$, $m-k>1$, $m \geq 1$, and $\gcd(a,m) = 1$. If $\ord_m(a) \mid m-k$ but $\ord_m(a) < m-k$, then there are infinitely many positive integers $n$ that satisfy $a^{n-k} \equiv 1 \pmod{n}$, unless $m-k=2$ and $a+1$ is a power of $2$, or $m-k = 6$ and $a=2$.
\end{lemma}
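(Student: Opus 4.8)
The plan is to take the paper's claim literally and turn it into a verification: re-examine the Kiss--Phong proof of Lemma~\ref{lemma:kiss} line by line, keeping track of exactly where the integer $k$ enters, and confirm that it only ever appears through the fixed difference $m-k$ and through the exponents $n-k$ of the integers $n$ that are constructed, never through the sign or size of $k$ on its own. Concretely, I would isolate the list of facts the original argument actually invokes and check that each one is among $a>1$, $m\geq 1$, $\gcd(a,m)=1$, and $m-k>1$, together with the running hypotheses $\ord_m(a)\mid m-k$ and $\ord_m(a)<m-k$. Since these are precisely the hypotheses of Lemma~\ref{lemma:generalkiss}, the conclusion then transfers verbatim.

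The heart of the matter is that the Kiss--Phong argument is a construction: starting from the seed $m$ (with $d:=\ord_m(a)$, $d\mid m-k$, and $d<m-k$, so $m-k=d\cdot t$ with $t\geq 2$), it produces a strictly increasing infinite sequence of positive integers $n_1<n_2<\cdots$, each satisfying $a^{n_i-k}\equiv 1\pmod{n_i}$. Every step is an identity or congruence in $\ZZ$ built from the base $a$, the cyclotomic-type factors $a^j-1$, and the single quantity $m-k$; the separate values of $k$ and $m$ enter only through the combination $m-k$ and through $\gcd(a,m)$. The two exceptional cases, namely $m-k=2$ with $a+1$ a power of $2$ and $m-k=6$ with $a=2$, are detected purely in terms of $a$ and the difference $m-k$ (with $d=\ord_m(a)$), so they carry over unchanged. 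The bulk of the work is transcribing each step and annotating it with the sign-free fact it uses.

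The one place where the sign of $k$ could intervene is the meaning of the congruence itself, since $a^{n-k}\equiv 1\pmod n$ requires the exponent $n-k$ to be a genuine nonnegative integer; this is where extra care is needed, and it is the easy direction. Because the constructed $n_i$ tend to infinity while $k$ is fixed, $n_i-k>0$ holds for all large $i$, and when $k\leq 0$ one even has $n_i-k=n_i+\lvert k\rvert>n_i>0$ for every $i$, so the exponent is automatically positive (more easily than in the original $k>0$ case, where finitely many small $n_i$ might be discarded). The genuinely new configurations admitted once $k\leq 0$ are exactly those with $m-k$ large and $m$ small, in particular the degenerate seed $m=1$; I would check separately that these are still handled by the construction, which they are, since $m=1$ gives $d=1$ and a well-defined integer $\tfrac{a^{1-k}-1}{a-1}>1$. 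The main obstacle is therefore not a new mathematical difficulty but the audit itself: certifying that the original proof's standing assumption ``$k$ a positive integer'' can everywhere be weakened to ``$k$ an integer with $m-k>1$,'' and in particular that positivity of $k$ is never silently used to guarantee $n-k\geq 0$, to bound a quantity, or to exclude a degenerate modulus. Once this bookkeeping is complete, the statement follows with the identical construction and the identical exceptional set.
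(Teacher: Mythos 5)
Your proposal takes essentially the same route as the paper, which offers no independent argument but simply asserts that the Kiss--Phong proof of Lemma~\ref{lemma:kiss} carries over verbatim because the argument is purely algebraic and $k$ enters only through the difference $m-k$. Your additional observations (that $n-k>0$ is automatic for $k\leq 0$, and that the degenerate seed $m=1$ still works) are exactly the bookkeeping the paper leaves implicit, so the approaches coincide.
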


This enables us to extend Theorem \ref{thm:kiss} for all integers $k$.

\begin{thm}
If $a\geq 2$ and $k$ are integers with $(k,a) \neq (0,2)$, there are infinitely many positive integers $n$ such that $a^{n-k} \equiv 1 \pmod{n}$. If $(k,a) = (0,2)$, then there are no integers $n>1$ such that $a^{n-k} \equiv 1 \pmod{n}$.
\end{thm}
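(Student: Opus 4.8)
The plan is to split the statement into the non-existence claim for $(k,a)=(0,2)$ and the existence claim for all other pairs, and to reduce the latter, via Lemma~\ref{lemma:generalkiss}, to the construction of a single seed integer $m$ for each $(a,k)$. For the non-existence claim, suppose $(k,a)=(0,2)$ and $n>1$ satisfies $2^{n}\equiv 1\pmod n$. Then $n$ is odd, so for the least prime divisor $p$ of $n$ the order $\ord_p(2)$ divides both $n$ and $p-1$; as every prime factor of $p-1$ is smaller than $p$ and hence coprime to $n$, we get $\ord_p(2)=1$, forcing $p\mid 1$, a contradiction. Thus no such $n$ exists.

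For the existence claim, the key point is that Lemma~\ref{lemma:generalkiss} turns ``infinitely many $n$'' into the finite task of producing one $m$ with $\gcd(a,m)=1$, $m\geq 1$, $m-k>1$, and $\ord_m(a)$ a proper divisor of $m-k$, provided we avoid the two degenerate configurations $m-k=2$ with $a+1$ a power of $2$ and $m-k=6$ with $a=2$. Since $k>0$ is already Theorem~\ref{thm:kiss}, it remains to treat $k<0$ and $k=0$, and in both cases I would choose $m$ so that $\ord_m(a)$ is as small as possible, making the divisibility $\ord_m(a)\mid m-k$ automatic.

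For $k<0$, set $\ell=-k\geq 1$. When $a\geq 3$ I would take $m=a-1$, so that $\ord_m(a)=1$; this divides $m-k=(a-1)+\ell\geq 3$, and since this quantity is never $2$ and the value $6$ is forbidden only when $a=2$, no exceptional configuration arises. When $a=2$ I would instead take $m=2^{d}-1$ for a prime $d$ dividing $\ell+1$, so that $\ord_m(2)=d$; Fermat's little theorem gives $2^{d}-1+\ell\equiv\ell+1\equiv 0\pmod d$, whence $d\mid m-k$. The only $\ell$ driving this into the forbidden case $m-k=6$ is $\ell=3$, which I would dispatch with the replacement seed $m=5$. (Alternatively, the hypothesis $m\geq 1$ in the lemma permits the slick uniform seed $m=1$, at the cost of two different exceptional pairs to patch.)

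For $k=0$ the congruence is $a^{n}\equiv 1\pmod n$; here $a=2$ is precisely the excluded non-existence case, so assume $a\geq 3$. The seed $m=a-1$ again gives $\ord_m(a)=1\mid m$, and for $a\geq 4$ we have $m\geq 3$, avoiding both exceptions; the lone remaining case $a=3$ forces $m=a-1=2$, which is forbidden, so I would replace it with $m=4$ (using $\ord_4(3)=2\mid 4$). The hard part is therefore not the main idea, which is just the order-one trick, but the bookkeeping required to sidestep the finitely many small exceptional configurations built into Lemma~\ref{lemma:generalkiss}: these are exactly the pairs where the uniform seed degenerates into $m-k\in\{2,6\}$, and each must be given an explicit alternative seed and verified by hand.
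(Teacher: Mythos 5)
Your proposal is correct and follows essentially the same route as the paper: both reduce the existence claim to Lemma \ref{lemma:generalkiss} by exhibiting an explicit seed $m$ (chiefly $m=a-1$ with $\ord_m(a)=1$), patch the finitely many configurations that hit the lemma's exceptions $m-k\in\{2,6\}$, and dispose of $(k,a)=(0,2)$ by the least-prime-divisor argument. The only divergence is cosmetic: for $a=2$, $k<0$ the paper simply takes $m=a-1=1$ (patching $k=-5$ with $m=3$), whereas you take $m=2^d-1$ for a prime $d\mid -k+1$ (patching $k=-3$ with $m=5$), and for $(k,a)=(0,3)$ the paper uses $m=8$ where you use $m=4$; all of these seeds check out.
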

\begin{proof}
Theorem \ref{thm:kiss} completely solves the case $k > 0$. Thus, we may assume that $k\leq 0$. For each pair of values $(k,a)$, it is sufficient to find an integer $m$ such that it satisfies the conditions of Lemma \ref{lemma:generalkiss}. For the following values of $k$ and $a$, letting $m = a-1$ works:
\begin{itemize}
    \item $k=0$ and $a \geq 4$
    \item $-1 \geq k \geq -4$ and $a \geq 2$
    \item $k=-5$ and $a \geq 3$
    \item $k \leq -6$ and $a \geq 2$
\end{itemize}
In all of these cases, $m-k > 1$, and if $m-k=6$, $a > 2$. Moreover, $\ord_m(a) = 1$, so $\ord_m(a) \mid m-k$ and $\ord_m(a) <  m-k$ hold automatically. Thus, we can use Lemma \ref{lemma:generalkiss}.

If we try to set $m=a-1$ for $(k,a) = (0,2)$, then we have $m-k = 1$, which is not allowed. If we set $m=a-1$ for $(k,a) = (0,3)$, then $m-k=2$ and $a+1 = 4$, which is also not allowed. If we set $m=a-1$ for $(k,a) = (-5, 2)$, then $m-k = 6$ and $a=2$, which is not allowed.

When $(k,a) = (0,3)$, we can apply Lemma \ref{lemma:generalkiss} with $m=8$, since $\ord_m(a) = 2 \mid 8 = m-k$. When $(k,a) = (-5,2)$, we let $m = 3$ so that $\ord_m(a) = 2 \mid 8 = m-k$.

In the case $(k,a) = (0,2)$, we obtain the congruence relation $2^n \equiv 1 \pmod{n}$. For the sake of contradiction, suppose that $n>1$ is an integer such that $2^n \equiv 1 \pmod{n}$. Let $p$ be the smallest prime divisor of $n$ so that $2^n \equiv 1 \pmod{p}$. Hence, $\ord_p(2) \mid n$. However, $\ord_p(2) < p$ and since $p$ was the smallest prime divisor $n$, $\ord_p(2) = 1$. That is, $p$ divides $2^1 - 1 = 1$, which is impossible.
\end{proof}

\section{Future Work} \label{sec:futurework}

We hope to heuristically explain our observations of Figure \ref{fig:dvsd} from Section \ref{sec:conjCk} and suggest reasonable guesses for the limit $\lim d_k(X)$. In addition, we note that some points from the ``pf=2'' category seem to be in the ``pf=1'' cluster, and similarly some points from the ``pf=3'' category seem to be in the ``pf=2'' cluster. 

We are unsure why this phenomenon occurs; however, it is plausible that $d_k(X)$ is proportional to a function of $k$ that is generally small when $k$ has many prime factors, but there are subtleties based on certain divisibility conditions on the prime factors. For example, $\lambda(k)$ is generally small when $k$ has many prime factors, but the actual size depends on the $\lcm$ of $p-1$ for all prime factors $p$ of $k$.

We may be able to learn more about $C_k$ by answering a variant of our original problem.
\begin{qn}\label{qn:notsqfree}
Given an integer $k$, for what integers $n > \max(k, 0)$ is $a^{n-k+1} \equiv a \mod{n}$ for all integers $a$ coprime to $n$?
\end{qn}
\begin{notation}
We denote the set of positive integers $n$ that satisfy the condition in Question \ref{qn:notsqfree} by $C'_k$:
\[ C'_k = \{ n \in \ZZ : n > \max(k,0) \text{ and } a^{n-k+1} \equiv a \pmod{n} \text{ for all integers } a \text{ coprime to $n$}\}. \]
\end{notation}
There is a simple condition to check that an integer is in $C'_k$.
\begin{prop}
    An integer $n > \max(k,0)$ is in $C'_k$ if and only if $\lambda(n) \mid n-k$.
\end{prop}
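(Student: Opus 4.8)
The plan is to mirror the structure of the proof of the generalized Korselt's criterion (Proposition~\ref{prop:korselt}), but without the squarefreeness conclusion, since here we only require the congruence to hold for $a$ coprime to $n$. The statement to prove is that $n > \max(k,0)$ lies in $C'_k$ if and only if $\lambda(n) \mid n-k$.

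For the forward direction, suppose $n \in C'_k$. By the definition of the Carmichael function, there is an element $a \in (\ZZ/n\ZZ)^\times$ of order exactly $\lambda(n)$. Since $\gcd(a,n)=1$, the defining congruence gives $a^{n-k+1} \equiv a \pmod{n}$, and multiplying by the inverse of $a$ yields $a^{n-k} \equiv 1 \pmod{n}$. Because the order of $a$ divides any exponent annihilating $a$, I conclude $\lambda(n) \mid n-k$. This is the easy half and is essentially identical to the first sentence of the proof of Proposition~\ref{prop:korselt}; the key simplification is that I no longer need to run the squarefreeness argument, because the problematic step there (testing $a = p$, which is not coprime to $n$) is exactly the case now excluded by hypothesis.

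For the converse, suppose $\lambda(n) \mid n-k$. It is well known that every element of $(\ZZ/n\ZZ)^\times$ has order dividing $\lambda(n)$, so for any $a$ coprime to $n$ we have $a^{\lambda(n)} \equiv 1 \pmod{n}$, and since $\lambda(n) \mid n-k$ this gives $a^{n-k} \equiv 1 \pmod{n}$. Multiplying both sides by $a$ yields $a^{n-k+1} \equiv a \pmod{n}$, which is exactly the condition for $n \in C'_k$. One should note that $n-k \geq 1$ (in fact $n - k + 1 \geq 2$) so that the exponent manipulations are legitimate and $a^{n-k}$ makes sense; this follows from $n > \max(k,0)$.

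I do not anticipate a genuine obstacle here: the result is strictly easier than Proposition~\ref{prop:korselt}, since dropping the requirement that the congruence hold for all integers $a$ (and keeping only those coprime to $n$) removes precisely the ingredient that forced squarefreeness. The only point requiring a small amount of care is making sure the exponent $n-k$ is a genuine nonnegative integer so that raising to it and cancelling $a$ are valid, which the hypothesis $n > \max(k,0)$ guarantees.
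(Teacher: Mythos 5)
Your proof is correct and takes exactly the route the paper intends: the paper's own proof is just the remark that the argument is ``similar to that of Proposition~\ref{prop:korselt},'' and what you wrote is precisely that argument with the squarefreeness step (the only part relying on non-coprime test values $a$) correctly stripped out. Nothing to add.
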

\begin{proof}
    The proof is similar to that of Proposition \ref{prop:korselt}.
\end{proof}

Note that for all integers $k$, $C_k \subseteq C'_k$. The key difference between $C_k$ and $C'_k$ is that the integers in $C_k$ must be squarefree while the integers in $C'_k$ may be non-squarefree.

It turns out that $C_1 = C'_1$. The \emph{composite} integers in $C'_k$ for $k>0$ are called the \emph{$k$-Kn\"{o}del numbers}, named after Walter Kn\"{o}del. In 1963, Makowski \cite{makowski1962generalization} proved that for any $k \geq 2$, there are infinitely many composite integers $n$ such that $a^{n+k-1} \equiv a \pmod{n}$ for all integers $a$ coprime to $n$.

\begin{figure}
    \centering
    \includegraphics[width=\linewidth]{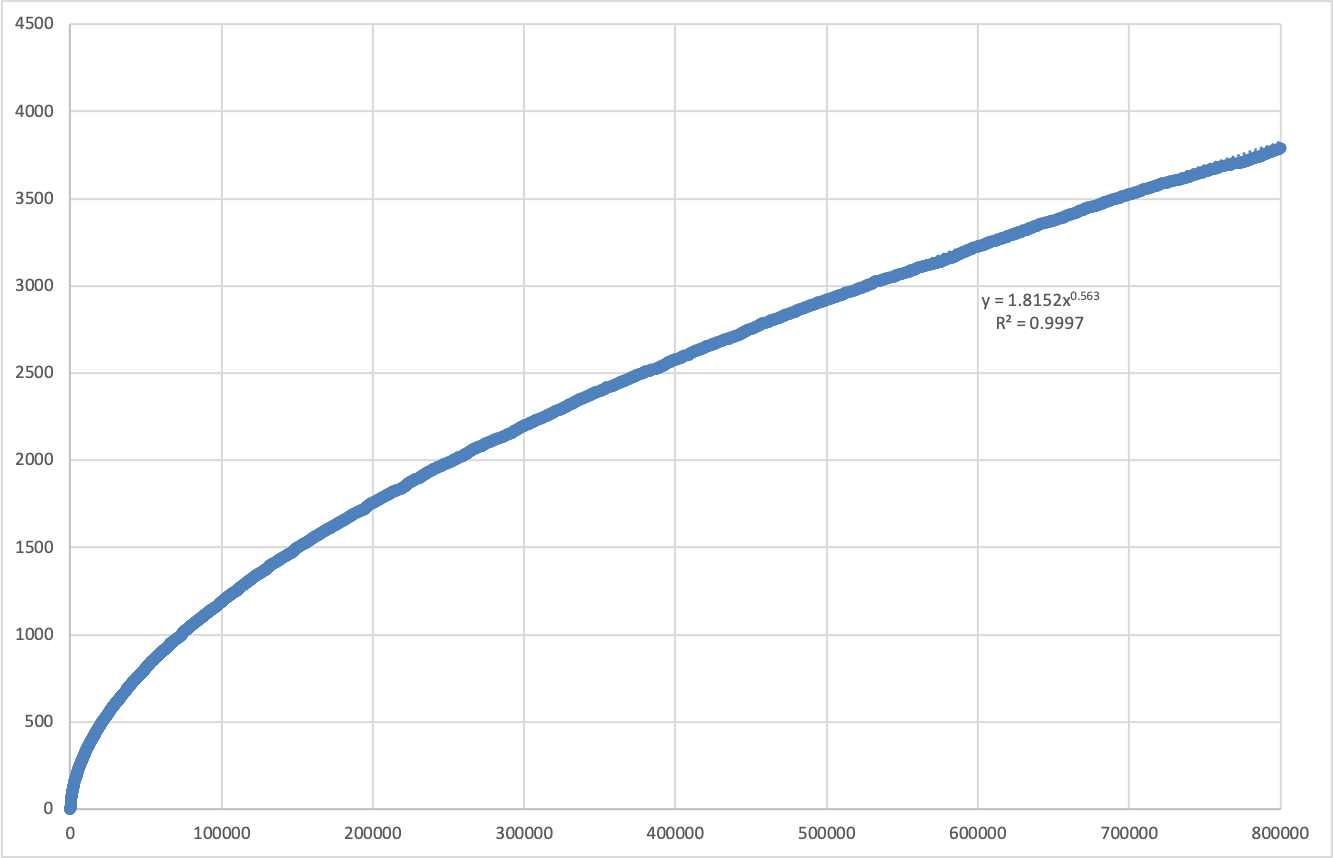}
    \caption{$C'_0(X)$ for $1 \leq X \leq 8 \times 10^5$.}
    \label{fig:C0powerfit}
\end{figure}

We numerically observe that $C'_k$ is large for the non-squarefree $k$, which contrasts the pattern that $C_k$ is small for non-squarefree $k$. In particular, there are many elements in $C'_0$. In Figure \ref{fig:C0powerfit}, we graph the counting function $C'_0(X) = \lvert C'_0 \cap (0, X] \rvert$. The following two propositions may explain this phenomenon.

First, we note that the exponents in the prime factorization of $n \in C'_k$ are bounded by $k$:
\begin{prop} \label{prop:notsqfreepower}
    If $\lambda(n)$ divides $n-k$ and $n = \prod_{i=1}^r p_i^{e_i}$, then $\prod_{i=1}^r p_i^{e_i-1}$ divides $k$.
\end{prop}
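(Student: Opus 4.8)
The plan is to localize the problem at each prime and then reassemble the result. Since the factors $p_i^{e_i-1}$ attached to distinct primes $p_i$ are pairwise coprime, it suffices to prove that $p_i^{e_i-1} \mid k$ for each $i$ separately; their product then divides $k$ automatically. The single identity driving the whole argument is
\[ k = n - (n-k), \]
so that any integer dividing both $n$ and $n-k$ must divide $k$. The hypothesis supplies the second divisibility through the chain $\lambda(p_i^{e_i}) \mid \lambda(n) \mid n-k$, while the first is immediate from $p_i^{e_i} \mid n$.

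Fix an index $i$ and write $p = p_i$, $e = e_i$. First I would record the easy half, $p^{e-1} \mid n$, which holds since $p^e \mid n$. The substantive half is $p^{e-1} \mid n-k$, and here I would invoke the closed form of the Carmichael function. When $p$ is odd, $\lambda(p^e) = p^{e-1}(p-1)$, so $p^{e-1} \mid \lambda(p^e) \mid \lambda(n) \mid n-k$. Combining $p^{e-1} \mid n$ and $p^{e-1} \mid n-k$ with the identity above yields $p^{e-1} \mid k$. Running this over every odd prime factor and multiplying gives the odd part of the claim, using only that distinct prime powers contribute coprime factors.

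The step demanding the most care—and the main obstacle—is the prime $p = 2$. The divisibility $p^{e-1} \mid \lambda(p^e)$ that powers the odd-prime argument does not persist at $2$: for $2^e$ with $e \geq 3$ one has $\lambda(2^e) = 2^{e-2}$, so the Carmichael function contributes only $2^{e-2}$, not $2^{e-1}$, to $n-k$. The small exponents are harmless, since $2^0 = 1$ when $e=1$ and $\lambda(4) = 2 = 2^{e-1}$ when $e=2$; the genuine issue is the apparent loss of one power of $2$ when $e \geq 3$. I would therefore isolate $p=2$ as a separate case and examine whether the missing factor can be recovered, either from the $2$-adic contribution of an odd prime factor $q \mid n$ with large $v_2(q-1)$, or from a sharper comparison of $v_2(n)$ and $v_2(n-k)$. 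Reconciling this factor-of-two discrepancy at the prime $2$ is exactly where the proof must be scrutinized, and it is the part I expect to require the most delicate treatment.
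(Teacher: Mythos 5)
The paper states this proposition without proof, so there is nothing to compare your argument against; judged on its own, your localization strategy --- reduce to $p_i^{e_i-1}\mid k$ for each $i$ via $k=n-(n-k)$, with $p_i^{e_i-1}\mid n$ trivially and $p_i^{e_i-1}\mid\lambda(p_i^{e_i})\mid\lambda(n)\mid n-k$ --- is exactly the right one and completely settles every odd prime, as well as $2^e$ for $e\le 2$. You are also right to refuse to wave away the case $p=2$, $e\ge 3$.

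But the factor-of-two discrepancy you flag cannot be ``reconciled'': it is not a gap in your proof, it is a counterexample to the proposition as stated. Take $n=8=2^3$ and $k=2$. Then $\lambda(8)=2$ divides $n-k=6$ (indeed $a^6\equiv 1\pmod 8$ for all odd $a$, so $8\in C'_2$), yet $\prod p_i^{e_i-1}=2^2=4$ does not divide $k=2$. Similarly $n=16$, $k=4$ gives $\lambda(16)=4\mid 12$ but $8\nmid 4$. The missing power of $2$ genuinely cannot be recovered from elsewhere: the only $2$-adic information the hypothesis provides is $v_2(n-k)\ge v_2(\lambda(n))\ge e-2$, and combined with $v_2(n)\ge e$ this yields only $v_2(k)\ge e-2$. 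The statement is correct if one replaces $p_i^{e_i-1}$ by $\lambda(p_i^{e_i})/(p_i-1)$ at $p_i=2$ (equivalently, the conclusion should be that $n/\operatorname{rad}(n)$ divides $2k$, with the stated form valid when $8\nmid n$). Your instinct that this is ``where the proof must be scrutinized'' was exactly right; the resolution is that the proposition needs amending, not the proof.
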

Moreover, elements from $C_k$ can generate elements in $C_{kp}$ for prime factors $p$ of $k$:
\begin{prop}\label{prop:notsqfreegenerate}
If $n \in C'_k$ is a multiple of a prime $p$, then $np \in C'_{kp}$.
\end{prop}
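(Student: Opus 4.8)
The plan is to invoke the criterion established just above, namely that an integer $m > \max(j,0)$ lies in $C'_{j}$ exactly when $\lambda(m) \mid m - j$. Applying this with $np$ in place of $m$ and $kp$ in place of $j$, the membership $np \in C'_{kp}$ amounts to the two conditions $np > \max(kp,0)$ and $\lambda(np) \mid np - kp = p(n-k)$. Since $n \in C'_k$ already gives $\lambda(n) \mid n-k$, it suffices to establish the purely multiplicative fact that $\lambda(np) \mid p\,\lambda(n)$ whenever $p \mid n$; chaining the two divisibilities then yields $\lambda(np) \mid p\,\lambda(n) \mid p(n-k) = np-kp$.

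First I would dispose of the size condition $np > \max(kp,0)$, which is immediate: if $k>0$ then $n > k$ forces $np > kp = \max(kp,0)$, while if $k \leq 0$ then $\max(kp,0) = 0 < np$ since $n \geq 1$ and $p \geq 2$.

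The substance of the argument is the divisibility $\lambda(np) \mid p\,\lambda(n)$. I would write $n = p^e m$ with $p \nmid m$ and $e \geq 1$, so that $np = p^{e+1}m$, and then apply the $\lcm$ formula $\lambda(p^a m) = \lcm(\lambda(p^a), \lambda(m))$ to both $n$ and $np$. The heart of the matter is the prime-power estimate $\lambda(p^{e+1}) \mid p\,\lambda(p^e)$, which I would read off from the closed form for $\lambda$ on prime powers: for odd $p$ one has $\lambda(p^{e+1}) = p^e(p-1) = p\,\lambda(p^e)$ with equality. Combining this with the trivial $\lambda(m) \mid p\,\lambda(m)$ and the monotonicity of $\lcm$ under divisibility, together with $\lcm(pa,pb) = p\,\lcm(a,b)$, gives
\[ \lambda(np) = \lcm\bigl(\lambda(p^{e+1}),\, \lambda(m)\bigr) \,\Big|\, \lcm\bigl(p\,\lambda(p^e),\, p\,\lambda(m)\bigr) = p\,\lcm\bigl(\lambda(p^e), \lambda(m)\bigr) = p\,\lambda(n). \]

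The only subtlety — the \emph{hard part}, though it is mild — is the case $p=2$ of the prime-power estimate, where $\lambda(2^{e+1})$ is \emph{not} always $2\,\lambda(2^e)$: the ratio is $1$ rather than $2$ when $e=2$, since $\lambda(8)=2$ while $\lambda(4)=2$. Here one must verify the \emph{divisibility} $\lambda(2^{e+1}) \mid 2\,\lambda(2^e)$ directly rather than relying on equality, which a short check on the three ranges $e=1$, $e=2$, and $e\geq 3$ settles ($\lambda(4)=2 \mid 2$, $\lambda(8)=2 \mid 4$, and $\lambda(2^{e+1})=2^{e-1} \mid 2^{e-1}$ for $e\geq 3$). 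Once this estimate is in hand, the remainder is a direct chain of divisibilities and requires no further work.
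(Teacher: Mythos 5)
Your proof is correct and complete. The paper states Proposition \ref{prop:notsqfreegenerate} without proof (it appears in the Future Work section), so there is nothing to compare against; your route via the criterion $\lambda(m) \mid m - j$ for membership in $C'_j$, reducing everything to the divisibility $\lambda(np) \mid p\,\lambda(n)$ for $p \mid n$, is the natural argument, and you correctly identify and handle the one genuine subtlety, namely that $\lambda(2^{e+1})$ divides but need not equal $2\,\lambda(2^e)$.
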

For $k=0$, Proposition \ref{prop:notsqfreepower} says that the exponents of the prime powers that divide $n$ are unbounded. Proposition \ref{prop:notsqfreegenerate} implies that if $n \in C'_0$ is a multiple of a prime $p$, then $np \in C'_0$. Quantifying this property may explain the exact growth rate of $C'_0(X)$.

Moreover, Proposition \ref{prop:notsqfreegenerate} suggests two categorizations of elements in $n \in C'_k$: ``old'' elements for which there exists $d \mid \gcd(n,k)$ with $d > 1$ such that $\frac{n}{d} \in C'_{k/d}$, and ``new'' elements without any such $d$. Further analysis of these types of elements may lead to a heuristic explanation for the largeness of $C'_k(X)$ for non-squarefree $k$.

\section{Acknowledgements}
Special thanks to Stefan Wehmeier for suggesting the project and providing advice on the best direction for research. We would like to thank the MIT PRIMES program for the opportunity to perform research under the mentorship of wonderful professors and graduate students.

\bibliography{citations}

\begin{thebibliography}{PSW80}

\bibitem[AGP94]{alford1994there}
W.~R. Alford, Andrew Granville, and Carl Pomerance.
\newblock There are infinitely many carmichael numbers.
\newblock {\em Ann. Math.}, pages 703--722, 1994.

\bibitem[Car10]{carmichael1910note}
R.~D. Carmichael.
\newblock Note on a new number theory function.
\newblock {\em Bull. Am. Math. Soc.}, 16(5):232--238, 1910.

\bibitem[Car12]{carmichael1912composite}
R.~D. Carmichael.
\newblock On composite numbers $p$ which satisfy the fermat congruence $a^{p-1}
  \equiv 1 \pmod{p}$.
\newblock {\em Amer. Math. Monthly}, 19(2):22--27, 1912.

\bibitem[Che39]{chernick1939fermat}
Jack Chernick.
\newblock On fermat's simple theorem.
\newblock {\em Bull. Amer. Math. Soc.}, 45(4):269--274, 1939.

\bibitem[Erd56]{erdos1956pseudoprimes}
Paul Erd\H{o}s.
\newblock On pseudoprimes and carmichael numbers.
\newblock {\em Publ. Math. Debrecen}, 4(1956):201--206, 1956.

\bibitem[HH99]{halbeisen1999generalised}
L.~Halbeisen and N.~Hungerb\"{u}hler.
\newblock On generalised carmichael numbers.
\newblock {\em Hardy-Ramanujan J.}, 1999.

\bibitem[Kor99]{korselt1899probleme}
Alwin Korselt.
\newblock Probleme chinois.
\newblock {\em L’interm{\'e}d. Math.}, 6:143--143, 1899.

\bibitem[KP87]{kiss1987problem}
P{\'e}ter Kiss and Bui~Minh Phong.
\newblock On a problem of a. rotkiewicz.
\newblock {\em Math. Comp.}, pages 751--755, 1987.

\bibitem[Mak63]{makowski1962generalization}
A.~Makowski.
\newblock Generalization of morrow's d-numbers.
\newblock {\em Bull. Belg. Math. Soc. Simon Stevin}, 36:71, 1962/1963.

\bibitem[Mor51]{morrow1951some}
DC~Morrow.
\newblock Some properties of d numbers.
\newblock {\em Am. Math. Monthly}, 58(5):329--330, 1951.

\bibitem[Pin93]{pinch1993carmichael}
R.~G.~E. Pinch.
\newblock The carmichael numbers up to $10^{15}$.
\newblock {\em Math. Comp.}, 61(203):381--391, 1993.

\bibitem[Pom81]{pomerance1981distribution}
Carl Pomerance.
\newblock On the distribution of pseudoprimes.
\newblock {\em Math. Comp.}, pages 587--593, 1981.

\bibitem[Pom89]{pomerance1989two}
Carl Pomerance.
\newblock Two methods in elementary analytic number theory.
\newblock {\em Number theory and applications}, 265:135--161, 1989.

\bibitem[PSW80]{pomerance1980pseudoprimes}
Carl Pomerance, J.~L. Selfridge, and Samuel~S. Wagstaff.
\newblock The pseudoprimes to $25\cdot 10^9$.
\newblock {\em Math. Comp.}, 35(151):1003--1026, 1980.

\bibitem[Rot70]{rotkiewicz1970pseudoprime}
Andrzej Rotkiewicz.
\newblock {\em Pseudoprime numbers and their generalizations}.
\newblock University of Novi Sad, 1970.

\bibitem[Rot84]{rotkiewicz1984congruence}
A.~Rotkiewicz.
\newblock On the congruence $2^{n-2} \equiv 1 \pmod{n}$.
\newblock {\em Math. Comp.}, pages 271--272, 1984.

\bibitem[Wri12]{wright2012infinitely}
Thomas Wright.
\newblock Infinitely many carmichael numbers in arithmetic progressions.
\newblock {\em arXiv preprint arXiv:1212.5850}, 2012.

\end{thebibliography}
\bibliographystyle{halpha}
\end{document}